\definecolor{verylight}{gray}{0.97}
\definecolor{light}{gray}{0.9}
\definecolor{medium}{gray}{0.85}
\definecolor{dark}{gray}{0.6}
\tikzset{main node/.style={circle,fill=blue!15,draw,minimum size=.5cm,inner sep=0pt},
}
\def\opn#1#2{\def#1{\operatorname{#2}}} 
\opn\projdim{proj\,dim} \opn\injdim{inj\,dim} \opn\rank{rank}
\opn\depth{depth} \opn\grade{grade} \opn\height{height}
\opn\reg{reg} \opn\lreg{lreg} \opn\ini{in} \opn\lpd{lpd}
\opn\link{link}\opn\fdepth{fdepth}
\opn\Ind{Ind}
\newtheorem{Theorem}{Theorem}[section]
 \newtheorem{corollary}[Theorem]{Corollary}
 \newtheorem{proposition}[Theorem]{Proposition}
 \newtheorem{remark}[Theorem]{Remark}
 \newtheorem{example}[Theorem]{Example}
 \newtheorem{definition}[Theorem]{Definition}
 \newcommand{\lr}[1]{\left\langle#1\right\rangle}
\newcommand{\K}[0]{\mathbb{K}}
\newcommand{\f}{\mathcal{F}}
\DeclareMathOperator{\dep}{depth}
\DeclareMathOperator{\s}{\mathcal{S}}
\DeclareMathOperator{\p}{\mathcal{P}}
\DeclareMathOperator{\g}{\mathcal{G}}
\opn\dis{dis}
\def\pnt{{\raise0.5mm\hbox{\large\bf.}}}
\opn\Lex{Lex}
\begin{document}
 \title{Quasi-forest simplicial complexes and almost Cohen-Macaulay}

 \author{ Chwas Ahmed, Amir Mafi* and Mohammed Rafiq Namiq }

\address{Ch. Ahmed, Department of Mathematics, College of Science, University of Sulaimani, Kurdistan Region, Iraq.}
\email{chwas.ahmed@univsul.edu.iq}

\address{A. Mafi, Department of Mathematics, University of Kurdistan, P.O. Box: 416, Sanandaj,
Iran.}
\email{a\_mafi@ipm.ir}

\address{M. R. Namiq, Department of Mathematics, College of Science, University of Sulaimani, Kurdistan Region, Iraq.}
\email{mohammed.namiq@univsul.edu.iq}

\subjclass[2010]{13C15, 13C13, 13H10.}

\keywords{Simplicial complex, Cohen-Macaulay simplicial complex, edge ideal\\
* Corresponding author}

\begin{abstract}
In this paper we study the quasi-forest simplicial complexes and we define the concept of simplicial $k$-cycle (denoted by $\s_k$) and simplicial $k$-point (denoted by $\p_k$).
We show that a simplicial complex $\Delta$ is quasi-forest if and only if it does not have any $\p_k$ and any $\s_k$ for $k\geq 3$.
Furthermore we characterize almost Cohen-Macaulay quasi-forest simplicial complexes. In the end we show that the cycle graph $G=C_n$ is almost Cohen-Macaulay if and only if $n=3,4,5,6,7,8,9,11$.
\end{abstract}

\maketitle

\section*{Introduction}
Throughout this paper, we assume that $R=K[x_1,...,x_n]$ is the polynomial ring in $n$ variables over a field $K$ and $G$ is a simple graph (without loops and multiple edges) with vertex set $V(G)=\{x_1,\dots,x_n\}$ and edge set $E(G)$. One associates to $G$ the edge ideal $I(G)$ of $R$ which is generated by all monomial $x_ix_j$ such that $\{x_i,x_j\}\in E(G)$. The independence complex and the clique complex of the graph $G$ are defined by $\Ind(G)=\{A\subseteq V(G)\mid A$ is an independence set in $G\}$ and $\Delta(G)=\{B\subseteq V(G)\mid B$ is a clique of $G\}$, respectively. Note that an independent set of $G$  is a subset $A$ of $V(G)$ such that none of its elements are adjacent and a clique of $G$ is a subset $B$ of $V(G)$ such that $\{x_i,x_j\}\in E(G)$ for all $x_i,x_j\in B$ with $i\neq j$. It easy to see that $\Delta(G)=\Ind(\overline{G})$, where $\overline{G}$ is the complement of $G$. Using the Stanley-Reisner correspondence, we can associate to $G$ the independent complex $\Ind(G)$, where $I_{\Ind(G)}=I(G)$. Hence the Stanley-Reisner ring of $\Ind(G)$ is $R/I(G)$.
The graph $G$ or the edge ideal $I(G)$ is called Cohen-Macaulay if $R/I(G)$ is Cohen-Macaulay. Cohen-Macaulay graphs were studied in \cite{V2, CRT}. A complete classification of Cohen-Macaulay graphs does not exist. Also, a graph $G$ or the edge ideal $I(G)$ is called almost Cohen-Macaulay if $R/I(G)$ is almost Cohen-Macaulay. We say that  $R/I(G)$ is almost Cohen-Macaulay when $\depth  R/I(G)\geq \dim R/I(G) -1$.  Almost Cohen-Macaulay rings have been studied in \cite{Ha, K1, K2, I, CTT, TM, TMA, MN2}.

Let $\Delta$ be a simplicial complex. A facet $F$ of $\Delta$ is called leaf, if there exists a facet $M$ of $\Delta$ with $F\neq M$ such that $N\cap F\subset M\cap F$ for all facet $N$ of $\Delta$ with $N\neq F$. If each subcomplex $\Gamma$ of $\Delta$ has a leaf, then $\Delta$ is called a forest. A simplicial complex $\Delta$ is called quasi-forest, if there exists an order $F_1,\ldots, F_r$ of the facets of $\Delta$ such that $F_i$ is a leaf of the simplicial complex $\langle F_1,\ldots, F_i\rangle$ for each $i=1,\ldots, r$. A free vertex is a vertex which belongs to precisely one facet. It is known that each leaf has a free vertex. But the converse is not true in general. It is clear that every forest is a quasi-forest. We say that the graph $G$ is quasi-forest when $\Ind(G)$ is a quasi-forest simplicial complex. The concept of quasi-forest has been studied in \cite{Z, Fa, HHZ, GPSY, GY}.

In this paper we define the concept of simplicial $k$-cycle (denoted by $\s_k$) and simplicial $k$-point (denoted by $\p_k$) and we give some examples.
We study the quasi-forest simplicial complex and we prove that a simplicial complex $\Delta$ is quasi-forest if and only if it does not have any $\p_k$ and any $\s_k$ for $k\geq 3$. Furthermore, we characterize almost Cohen-Macaulay quasi-forest simplicial complexes as a generalization of \cite[Proposition 2.3]{GPSY}. In the end we prove that the cycle graph $G=C_n$ is almost Cohen-Macaulay if and only if $n=3,4,5,6,7,8,9,11$.
For any unexplained notion or terminology, we refer the reader to
\cite{HH, V}. Several explicit examples were performed with help of the computer algebra systems Macaulay2 \cite{GS}.

\section{Preliminaries}
In this section, we recall some definitions and known results which are used in this paper.

A simplicial complex $\Delta$ on the vertex set $V=\{x_1,\ldots,x_n\}$ is a collection of subsets of $V$ such that (i) $\{x_i\}\in\Delta$ for every $1\leq i\leq n$, and (ii) if $F\in\Delta$ and $H\subseteq F$, then $H\in\Delta$. Each element $F$ of $\Delta$ is called a face of $\Delta$ and it is called an $i$-face when $|F|=i+1$. The dimension of a face $F$ is $|F|-1$ and the dimension of $\Delta$ is defined to be $\dim\triangle=d-1$, where $d=\max\{|F|\mid F\in\Delta\}$. A facet of $\Delta$ is maximal face (with respect to inclusion). The set $\f(\Delta):=\{F_1,\ldots, F_r\}$ is the set of all facets of $\Delta$. A simplicial complex $\Delta$ with the facets $F_1,\dots, F_r$ is denoted by $\Delta=\lr{F_1,\dots,F_r}$. The Stanley-Reisner ideal of $\Delta$ is $I_\Delta:=\lr{\prod_{x_i\in F}x_i\mid F\notin\Delta}$ and the quotient ring $K[\Delta]=R/I_{\Delta}$ is the Stanley-Reisner ring of $\Delta$ over a field $K$ where $R=K[x_1,\dots,x_n]$. A simplicial complex $\Delta$ is pure if every facet has the same cardinality. A simplicial complex $\overline{\Delta}=\lr{\overline{F}\mid F\in\f(\Delta)}$ is the complement of $\Delta$. The Alexander dual of $\Delta$, denoted by $\Delta^\vee$, is defined as $\Delta^\vee=\{V\setminus F\mid F\not\in\Delta\}$. The subcomplex $\Delta(i)=\{F\in\Delta\mid\dim F= i\}$ is called the pure $i$-skeleton of $\Delta$.
 Terai \cite{Te} proved that if $I$ is a square-free monomial ideal of $R$, then the Castelnuovo-Mumford regularity $\reg(I_{\Delta^\vee})=\projdim(R/I_\Delta)$.

Herzog, Hibi and Zheng \cite{HHZ} proved the following beautiful result:

\begin{proposition}
A simplicial complex $\Delta$ is quasi-forest if and only if $\projdim I(\overline{\Delta})=1$.
\end{proposition}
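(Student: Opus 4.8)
The plan is to pass to the Alexander dual and then invoke the cited formula of Terai together with Fröberg's theorem on linear resolutions, reducing everything to a purely graph-theoretic statement. First I would record the identification $I(\overline{\Delta})=I_{\Delta^\vee}$: the facet ideal of the complement complex and the Stanley--Reisner ideal of the Alexander dual are both minimally generated by the squarefree monomials $x_{V\setminus F}:=\prod_{x_i\notin F}x_i$ as $F$ runs over the facets of $\Delta$, so they coincide. This is a direct comparison of generators and needs no more than the definitions recalled above.

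Next I would reduce the projective-dimension hypothesis to a regularity statement. Applying Terai's equality $\reg(I_{\Gamma^\vee})=\projdim(R/I_\Gamma)$ to $\Gamma=\Delta^\vee$ and using $(\Delta^\vee)^\vee=\Delta$ gives $\projdim(R/I_{\Delta^\vee})=\reg(I_\Delta)$. Since $\projdim I(\overline{\Delta})=\projdim(R/I_{\Delta^\vee})-1$, the condition $\projdim I(\overline{\Delta})=1$ is equivalent to $\reg(I_\Delta)=2$. Now $I_\Delta$ is a squarefree monomial ideal whose minimal generators all have degree at least $2$, because every minimal non-face has at least two vertices; hence $\reg(I_\Delta)=2$ forces all minimal generators to have degree exactly $2$ and the resolution to be linear. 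Degree-$2$ generation means precisely that $\Delta$ is a flag complex, $\Delta=\Delta(G)$ for its $1$-skeleton $G$, in which case $I_\Delta=I(\overline{G})$. By Fröberg's theorem $I(\overline{G})$ has a linear resolution if and only if $G$ is chordal. Combining these equivalences, $\projdim I(\overline{\Delta})=1$ holds if and only if $\Delta$ is the clique complex of a chordal graph.

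What remains, and what I expect to be the main obstacle, is the combinatorial identification of quasi-forests with clique complexes of chordal graphs; this is a Dirac-type argument. For the forward implication I would induct on the number of facets: a leaf $F$ meets every other facet inside its branch $F\cap M=:W$, so the vertices of $F\setminus W$ are free, and each such free vertex is simplicial in $G$ (its neighborhood is the remaining part of $F$, a clique). Eliminating these free vertices returns the $1$-skeleton of the smaller quasi-forest $\langle F_1,\dots,F_{r-1}\rangle$, so by induction and Dirac's perfect-elimination criterion $G$ is chordal, and checking that every clique of $G$ lies in a facet gives $\Delta=\Delta(G)$. Conversely, a simplicial vertex of a chordal $G$ lies in a unique maximal clique, which is a leaf of $\Delta(G)$ with that vertex free; deleting it and inducting shows $\Delta(G)$ is a quasi-forest.

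As an independent check on the implication ``quasi-forest $\Rightarrow \projdim I(\overline{\Delta})=1$'', one can avoid the duality machinery entirely and argue by induction on the number $r$ of facets through a mapping cone. With $F_r$ a leaf, branch $F_j$, and $W=F_j\cap F_r$, set $J=(x_{V\setminus F_1},\dots,x_{V\setminus F_{r-1}})$. The leaf property $F_i\cap F_r\subseteq W$ makes $x_{F_r\setminus W}$ divide every colon quotient $x_{V\setminus F_i}:x_{V\setminus F_r}=x_{F_r\setminus F_i}$, so $(J:x_{V\setminus F_r})=(x_{F_r\setminus W})$ is principal. The short exact sequence
\[ 0\to R/(x_{F_r\setminus W}) \xrightarrow{\,\cdot\,x_{V\setminus F_r}\,} R/J \to R/I(\overline{\Delta})\to 0, \]
in which $R/(x_{F_r\setminus W})$ has projective dimension $1$ and $R/J$ has projective dimension $2$ by the inductive hypothesis, forces $\projdim R/I(\overline{\Delta})\le 2$, while the presence of at least two distinct monomial generators gives the reverse inequality. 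The one point requiring care is the ambient variable set: the free vertices of $F_r$ divide every generator of $J$, so taking all complements inside the fixed vertex set $V$ one has $J\cong I(\overline{\langle F_1,\dots,F_{r-1}\rangle})$ up to this common factor, and $\projdim R/J=2$ is read off from the inductive resolution after accounting for it.
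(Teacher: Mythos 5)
The paper itself does not prove this proposition---it quotes it from Herzog--Hibi--Zheng---so your attempt is necessarily an independent argument. Your reductions are fine: the identification $I(\overline{\Delta})=I_{\Delta^\vee}$, the passage via Terai's formula to $\reg(I_\Delta)=2$, the observation that this forces $\Delta$ to be flag with chordal $1$-skeleton, and the mapping-cone verification of ``quasi-forest $\Rightarrow \projdim I(\overline{\Delta})=1$'' (the computation $(J:x_{V\setminus F_r})=(x_{F_r\setminus W})$ from $F_i\cap F_r\subseteq W$ is correct) are all sound, as is the forward half of the Dirac-type identification via free vertices being simplicial.

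The gap is in the converse of that identification. Your claim that a simplicial vertex of a chordal graph $G$ lies in a unique maximal clique ``which is a leaf of $\Delta(G)$'' is false. Take $G$ on $\{v,a,b,c,d\}$ with edges $va$, $vb$, $ab$, $ac$, $bd$. This graph is chordal and $v$ is simplicial, with unique maximal clique $F=\{v,a,b\}$; but the other facets $\{a,c\}$ and $\{b,d\}$ of $\Delta(G)$ meet $F$ in $\{a\}$ and $\{b\}$ respectively, and no facet $M\neq F$ satisfies $N\cap F\subseteq M\cap F$ for both choices of $N$, so $F$ is not a leaf. (Here $\Delta(G)=\langle\{v,a,b\},\{a,c\},\{b,d\}\rangle$ is a quasi-forest, but $F$ can never be the \emph{last} facet of a leaf order.) Hence ``deleting it and inducting'' does not produce a leaf order, and the implication ``clique complex of a chordal graph $\Rightarrow$ quasi-forest''---which is exactly what the direction $\projdim I(\overline{\Delta})=1\Rightarrow$ quasi-forest reduces to in your scheme---is not established. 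The repair is to induct on vertices rather than on facets: delete the simplicial vertex $v$ itself, take by induction a leaf order $G_1,\dots,G_s$ of $\Delta(G-v)$, and then either replace $G_i$ by $F=\{v\}\cup N(v)$ in the same position when $N(v)=G_i$ is a maximal clique of $G-v$ (all intersections are unchanged because $v$ is free), or append $F$ at the end with branch any facet containing $N(v)$ when $N(v)$ is not maximal in $G-v$. Without this case analysis one direction of the proposition is missing.
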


Now, since $I(\overline{\Delta})=I_{\Delta^\vee}$ (see \cite[Lemma 1.2]{HHZ}) and by using Terai's result we conclude that a simplicial complex $\Delta$ is quasi-forest if and only if $\reg I_\Delta=2$.

Recall that a simplicial complex is called flag, if all minimal nonfaces consist of two elements, equivalently, $I_{\Delta}$ is generated by quadratic monomials. By \cite[Lemma 3.2]{HHZ}, a quasi-forest simplicial complex is flag. A monomial ideal $I$ generated in degree $d$ has a linear resolution if and only if the Castelnuovo-Mumford regularity of $I$ is $\reg(I)=d$ (see \cite[Lemma 49]{Va}). Fr\"oberg \cite{F} proved that the edge ideal $I(G)$ has a linear resolution if and only if $\overline{G}$ is chordal. Recall that a graph $G$ is called chordal if each cycle of length $>3$ has a chord.

As before the independent simplicial complex of a graph $G$ is the clique complex of $\overline{G}$ and vice versa. One can rephrase \cite[Lemma 3.1]{HHZ} as follows:
\begin{Theorem}\label{T0}
	Let $G$ be a graph and $I_{\Delta}=I(G)$ be its edge ideal. Then
	\begin{enumerate}
		\item[(1)] $\Delta=\Ind(G)$;
		\item[(2)] $\overline{G}=\Delta(1)$;
		\item[(3)] $\Delta$ is quasi-forest if and only if $\overline{G}$ is chordal.
	\end{enumerate}
\end{Theorem}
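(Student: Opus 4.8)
The plan is to derive (1) and (2) directly from the Stanley--Reisner dictionary and to obtain (3) by assembling the results recalled above. For (1), I would unwind the definition of $I_\Delta$: its minimal generators $\prod_{x_i\in F}x_i$ correspond to the minimal nonfaces $F$ of $\Delta$, and since $I_\Delta=I(G)$ is generated by the quadratic squarefree monomials $x_ix_j$ with $\{x_i,x_j\}\in E(G)$, the minimal nonfaces of $\Delta$ are exactly the edges of $G$. A subset $A\subseteq V$ thus lies in $\Delta$ precisely when it contains no edge of $G$, i.e.\ when $A$ is independent in $G$, so $\Delta=\Ind(G)$. For (2), I would identify the $1$-faces: a pair $\{x_i,x_j\}$ is a $1$-face of $\Delta=\Ind(G)$ if and only if it is independent in $G$, equivalently $\{x_i,x_j\}\notin E(G)$, equivalently $\{x_i,x_j\}\in E(\overline G)$. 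Hence the edges of the pure $1$-skeleton $\Delta(1)$ are precisely the edges of $\overline G$, and both graphs carry the vertex set $V$, so $\overline G=\Delta(1)$.

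For (3), the strategy is to chain the equivalences recalled in Section 1. The Herzog--Hibi--Zheng proposition, combined with the identity $I(\overline\Delta)=I_{\Delta^\vee}$ and Terai's result $\reg I_{\Delta^\vee}=\projdim(R/I_\Delta)$, already gives (as observed in the preliminaries) that $\Delta$ is quasi-forest if and only if $\reg I_\Delta=2$. Since $I_\Delta=I(G)$ is generated in degree $2$, the regularity criterion for linear resolutions shows that $\reg I_\Delta=2$ is equivalent to $I(G)$ having a linear resolution. Finally, Fr\"oberg's theorem states that $I(G)$ has a linear resolution if and only if $\overline G$ is chordal. Concatenating these three equivalences proves (3).

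The genuine content lies only in (3), and even there it is drawn entirely from the cited theorems, so I do not anticipate a real obstacle: (1) and (2) are formal consequences of the definitions, while (3) is a transitive composition of equivalences already set up in the preliminaries. The point needing the most care is making explicit that the statements ``$\reg I_\Delta=2$'' and ``$I(G)$ has a linear resolution'' coincide here; this hinges on $I(G)$ being generated in the single degree $2$, which is exactly the hypothesis of the linear-resolution criterion and is immediate since $I(G)$ is a quadratic monomial ideal.
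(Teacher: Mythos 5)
Your proposal is correct and follows essentially the same route as the paper: parts (1) and (2) are the standard Stanley--Reisner unwinding, and part (3) is exactly the chain of equivalences (quasi-forest $\Leftrightarrow$ $\reg I_\Delta=2$ via Herzog--Hibi--Zheng and Terai, then $\Leftrightarrow$ linear resolution since $I(G)$ is quadratic, then $\Leftrightarrow$ $\overline{G}$ chordal via Fr\"oberg) that the paper assembles in its preliminaries before citing the theorem to \cite{HHZ}. No gaps.
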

Now, one can conclude that the independence complex $\Ind(G)$ is quasi-forest if and only if the edge ideal $I(G)$ has a linear resolution.

A Ferrers graph is a bipartite graph on two distinct vertex sets $X=\{x_1,...,x_n\}$ and $Y=\{y_1,\ldots,y_m\}$ such that if $x_iy_j$ is an edge of $G$, then so is $x_py_q$ for all $1\leq p\leq i$ and $1\leq q\leq j$.

Corso and Nagel \cite[Theorem 4.2]{CN} proved the following result:
\begin{Theorem}
Let $G$ be a bipartite graph without isolated vertices. Then its edge ideal has a 2-linear resolution if and only if $G$ is (up to a relabeling of the vertices) a Ferrers graph.	
\end{Theorem}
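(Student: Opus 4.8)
The plan is to deduce the statement from Fr\"oberg's theorem, recalled above, rather than to compute the resolution directly. Since $I(G)$ is an edge ideal it is generated in degree $2$, so having a $2$-linear resolution is the same as having a linear resolution; by Fr\"oberg's theorem this happens if and only if $\overline{G}$ is chordal. Thus the whole theorem reduces to the purely combinatorial equivalence: a bipartite graph $G$ with parts $X$ and $Y$ and no isolated vertices has $\overline{G}$ chordal if and only if $G$ is, after relabeling, a Ferrers graph. I would prove this equivalence in two steps, first relating chordality of $\overline{G}$ to a forbidden induced subgraph of $G$, and then relating that forbidden subgraph to the Ferrers shape.

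For the first step, I would exploit the structure of $\overline{G}$. Because $G$ is bipartite with parts $X$ and $Y$, there are no edges of $G$ inside $X$ or inside $Y$, so both $X$ and $Y$ are cliques of $\overline{G}$, while an edge between $X$ and $Y$ in $\overline{G}$ is exactly a non-edge of $G$. Consequently any induced cycle of $\overline{G}$ can use at most two vertices from each clique (three pairwise adjacent vertices on a chordless cycle of length $\geq 4$ would force a chord, since among any three vertices of such a cycle some pair is non-consecutive), so every induced cycle of length $\geq 4$ has length exactly $4$ and uses two vertices $x_1,x_2\in X$ and two vertices $y_1,y_2\in Y$. Writing out which of the six pairs are edges of $\overline{G}$, such an induced cycle corresponds precisely to two edges $x_1y_1, x_2y_2\in E(G)$ together with the non-edges $x_1y_2, x_2y_1\notin E(G)$, that is, to an induced $2K_2$ in $G$. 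This yields the equivalence: $\overline{G}$ is chordal if and only if $G$ contains no induced $2K_2$.

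For the second step I would show that a bipartite graph without isolated vertices is $2K_2$-free if and only if it is a Ferrers graph up to relabeling. If $G$ contains no induced $2K_2$, then the neighborhoods $\{N(x): x\in X\}$ are totally ordered by inclusion: two incomparable neighborhoods would produce vertices $y_1\in N(x_1)\setminus N(x_2)$ and $y_2\in N(x_2)\setminus N(x_1)$, hence an induced $2K_2$ on $x_1,x_2,y_1,y_2$; the same argument applies to the neighborhoods on the $Y$-side. I would then relabel $X$ so that $N(x_1)\supseteq N(x_2)\supseteq\cdots$ and $Y$ so that $N(y_1)\supseteq N(y_2)\supseteq\cdots$, and check directly that this labeling satisfies the Ferrers condition: if $x_iy_j\in E(G)$ then $y_j\in N(x_i)\subseteq N(x_p)$ for $p\leq i$, so $x_py_j\in E(G)$, and then $x_p\in N(y_j)\subseteq N(y_q)$ for $q\leq j$, so $x_py_q\in E(G)$. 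The converse, that a Ferrers graph has nested neighborhoods and hence no induced $2K_2$, is immediate from the defining staircase condition.

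The routine parts are the case analyses; the main obstacle is organizing the second step cleanly, in particular verifying that the two inclusion orders on $X$ and $Y$ are mutually compatible so that a single simultaneous relabeling produces the staircase, and checking that the hypothesis of no isolated vertices is exactly what guarantees that every row and column of the resulting Ferrers diagram is nonempty (so that $G$ is genuinely a Ferrers graph, not one with degenerate empty parts). Once the forbidden-subgraph characterization of chordality is in hand, assembling the equivalences with Fr\"oberg's theorem is straightforward.
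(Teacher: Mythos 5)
Your argument is correct, but note that the paper does not prove this statement at all: it is imported verbatim as \cite[Theorem 4.2]{CN}, so there is no in-paper proof to compare against. Your derivation is a legitimate self-contained route: reduce ``2-linear resolution'' to ``$\overline{G}$ chordal'' via Fr\"oberg's theorem (which the paper does recall), observe that for bipartite $G$ the parts $X$ and $Y$ are cliques of $\overline{G}$ so every induced cycle of length $\geq 4$ in $\overline{G}$ is a $C_4$ with two vertices in each part, translate such a $C_4$ into an induced $2K_2$ of $G$, and then show that $2K_2$-freeness is equivalent to the neighborhoods on each side being totally ordered by inclusion, which after sorting gives exactly the staircase condition. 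Each of these steps checks out, including the verification that the two sorted orders need no further compatibility (your own two-line computation $y_j\in N(x_i)\subseteq N(x_p)$ followed by $x_p\in N(y_j)\subseteq N(y_q)$ already settles the point you flag as the ``main obstacle''), and the no-isolated-vertices hypothesis is used exactly where you say, to rule out empty rows and columns. The original Corso--Nagel proof is heavier in one direction (they construct an explicit minimal cellular resolution of Ferrers ideals), so your combinatorial argument is shorter and fits more naturally into this paper's framework, at the cost of relying on Fr\"oberg's theorem as a black box.
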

By the above theorem, one can conclude that if $G$ is a Ferrers graph, then $\reg(I(G))=2$. In particular, the independence complex $\Ind(G)$ is  quasi-forest.

Fr\"oberg  \cite{F} proved that the following result:
\begin{Theorem}\label{T00}
Let $\Delta$ be a $(d-1)$-dimensional quasi-forest. Then $f_{d-1}\leq n-d+1$ with equality if and only if $\K[\Delta]$ is Cohen-Macaulay, where $f(\Delta)=(f_0,f_1,\ldots,f_{d-1})$ is the $f$-vector of $\Delta$.
	
\end{Theorem}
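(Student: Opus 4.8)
The plan is to separate the inequality from the equality characterization. First I would fix a leaf order $F_1,\dots,F_r$ of the facets, so that $F_i$ is a leaf of $\Delta_i=\lr{F_1,\dots,F_i}$. Since each leaf has a free vertex and the facets of $\Delta$ are pairwise incomparable (so each $F_k$ with $k<i$ is still a facet of $\Delta_i$), every $F_i$ carries a vertex $v_i\in F_i$ with $v_i\notin F_1\cup\cdots\cup F_{i-1}$, and these $v_i$ are pairwise distinct. Let $F_{i_0}$ be the first facet of top dimension $d-1$ in the order. Its $d$ vertices, together with the private vertices $v_i$ of the remaining top-dimensional facets $F_i$ (those with $i>i_0$), form $d+(f_{d-1}-1)$ pairwise distinct vertices of $\Delta$, because $v_i\notin F_{i_0}$ whenever $i>i_0$. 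Counting them gives $n\ge d+f_{d-1}-1$, i.e. $f_{d-1}\le n-d+1$.

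For the equality statement I would bring in the $h$-vector $(h_0,\dots,h_d)$ of $\K[\Delta]$. Evaluating the standard conversion between $f$- and $h$-vectors at $t=1$ gives $\sum_{i=0}^d h_i=f_{d-1}$, while the low-degree terms give $h_0=1$ and $h_1=f_0-d=n-d$. Hence
\[
f_{d-1}=(n-d+1)+\sum_{i=2}^{d}h_i .
\]
Thus the inequality just proved is equivalent to $\sum_{i\ge 2}h_i\le 0$, and equality $f_{d-1}=n-d+1$ is equivalent to $\sum_{i\ge 2}h_i=0$. If $\K[\Delta]$ is Cohen-Macaulay, its $h$-vector is nonnegative, so $\sum_{i\ge2}h_i\ge0$; combined with $\sum_{i\ge2}h_i\le0$ this forces $\sum_{i\ge2}h_i=0$, that is, $f_{d-1}=n-d+1$.

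Conversely, suppose equality holds. Then the vertex count above is exact, so every vertex of $\Delta$ lies in $F_{i_0}$ or equals some $v_i$ with $i>i_0$ of top dimension. I would first deduce purity: a facet $F_j$ of dimension $<d-1$ with $j>i_0$ would contribute the extra vertex $v_j\notin F_{i_0}$, while for $j<i_0$ every vertex of $F_j$ would be forced into $F_{i_0}$ (a private vertex $v_i$ with $i>i_0$ cannot lie in $F_j\subseteq F_1\cup\cdots\cup F_{i-1}$), giving $F_j\subseteq F_{i_0}$ and contradicting the maximality of $F_j$. With $\Delta$ pure we have $i_0=1$, $f_{d-1}=r$, and equality $n=d+(r-1)$ means each $F_i$ with $i\ge2$ adds exactly one new vertex, so $|F_i\cap(F_1\cup\cdots\cup F_{i-1})|=d-1$. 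By the leaf property this intersection equals $F_i\cap M$ for the branch $M$ of $F_i$, a single $(d-2)$-face, so $F_i$ meets $\Delta_{i-1}$ in the pure $(d-2)$-dimensional complex $\lr{F_i\cap M}$. Hence $F_1,\dots,F_r$ is a shelling, $\Delta$ is shellable, and therefore $\K[\Delta]$ is Cohen-Macaulay.

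The main obstacle is this last implication, equality $\Rightarrow$ Cohen-Macaulay: one must rule out non-pure complexes and then recognise the leaf order as a shelling. The refined vertex count handles purity cleanly, and the leaf/branch structure then delivers the codimension-one attachments needed for the shelling. By contrast the inequality and the Cohen-Macaulay $\Rightarrow$ equality direction are comparatively soft once the identity $f_{d-1}=(n-d+1)+\sum_{i\ge2}h_i$ and the nonnegativity of the $h$-vector are in hand.
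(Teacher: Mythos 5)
The paper does not prove this statement at all: it is quoted as a known theorem of Fr\"oberg \cite{F}, so there is no internal proof to compare against. Judged on its own, your argument is correct and complete. The inequality via the leaf order is sound: since the facets of $\Delta$ are pairwise incomparable, the facets of $\langle F_1,\dots,F_i\rangle$ are exactly $F_1,\dots,F_i$, so the free vertex of the leaf $F_i$ really does avoid $F_1\cup\dots\cup F_{i-1}$, the $v_i$ are pairwise distinct, and counting $F_{i_0}$ together with the private vertices of the later top-dimensional facets gives $n\ge d+f_{d-1}-1$. The identity $f_{d-1}=(n-d+1)+\sum_{i\ge 2}h_i$ plus nonnegativity of the $h$-vector of a Cohen--Macaulay complex handles one implication, and your converse is the genuinely substantive part: the exact vertex count forces purity (both cases $j<i_0$ and $j>i_0$ are correctly dispatched), then $|F_i\setminus(F_1\cup\dots\cup F_{i-1})|=1$ combined with the branch property shows that $\overline{F_i}\cap\langle F_1,\dots,F_{i-1}\rangle$ is the full simplex on the $(d-2)$-face $F_i\cap M$, so the leaf order is a shelling and $K[\Delta]$ is Cohen--Macaulay. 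The only ingredients taken on faith are standard (every leaf has a free vertex, which the paper itself asserts; shellable pure complexes are Cohen--Macaulay; the $h$-vector of a Cohen--Macaulay complex is nonnegative). This is essentially the classical proof, and it is a net gain that your write-up makes explicit an argument the paper only cites.
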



\section{Relations on quasi-forest simplicial complex}

We start this section by the following definition:
\begin{definition}
An {\it induced $k$-cycle} in a graph $G$ is a 3-cycle or a chordless cycle of length $k\geq4$, we denote it by $C_k$. A {\it simplicial $k$-cycle}, denoted by $\s_k$, in $\Delta$ is an induced $k$-cycle $C_k$ in $\Delta(1)$ such that no more than two vertices of $C_k$ are in the same facet of $\Delta$.
\end{definition}

\begin{example}\label{E1}
(i) Let $\Delta$ be a simplicial complex which has the facets\\
$\{x_1,x_2,x_3\},\{x_2,x_4,x_6\},\{x_3,x_4,x_5\}$. The cycle $x_2x_3,x_3x_4,x_4x_2$ is $\s_3$ in $\Delta$.

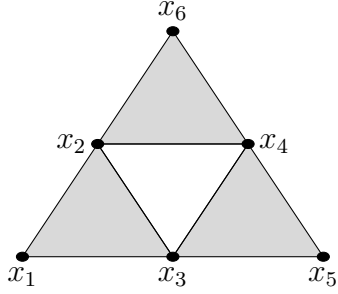
\begin{figure}[H]
		\begin{center}
			\begin{tikzpicture}[x = 2cm, y = 1.5cm]
			\path[fill = gray!30] (-1, 0) -- (-.5, 1) -- (0,0 ) -- (-1, 0);
			\path[fill = gray!30] (0,0) -- (.5,1) -- (1, 0)--(0,0);
			\path[fill = gray!30] (-.5,1) -- (.5, 1) -- (0,2)-- (-.5,1);

			\draw[fill] (-1,0) circle [radius = 0.04];
			\draw[fill] (0,0) circle [radius = 0.04];
			\draw[fill] (-.5,1) circle [radius = 0.04];
			\draw[fill] (.5,1) circle [radius = 0.04];
			\draw[fill] (1,0) circle [radius = 0.04];
			\draw[fill] (0,2) circle [radius = 0.04];
			
			\draw (-1, 0) -- (-.5, 1) -- (0,0 ) -- (-1, 0);
			\draw (0,0) -- (-0.5,1) -- (.5,1)--(0,0);
			\draw (0,0) -- (.5,1) -- (1, 0)--(0,0);
			\draw (-.5,1) -- (.5, 1) -- (0,2)-- (-.5,1);
			
			\node[below] at (-1,0) {$x_1$};
			\node[below] at (0,0) {$x_3$};
			\node[below] at (1,0) {$x_5$};
			\node[left] at (-.5,1) {$x_2$};
			\node[right] at (.5,1) {$x_4$};
			\node[above] at (0,2) {$x_6$};
		\end{tikzpicture}
		\end{center}
		\caption{Simplicial complex $\Delta$}
\end{figure}

(ii) Let $\Gamma$ be a simplicial complex which has the facets\\
$\{x_1,x_2,x_3\},\{x_2,x_4,x_6\},\{x_3,x_4,x_5\},\{x_2,x_3,x_4\}$. Then $\Gamma$ does not have any $\s_k$.

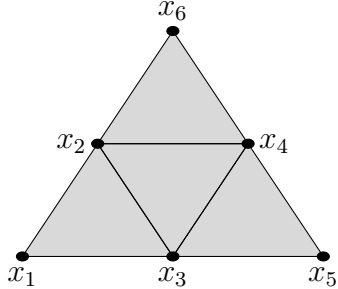
\begin{figure}[H]
		\begin{center}
				\begin{tikzpicture}[x = 2cm, y = 1.5cm]
				\path[fill = gray!30] (-1, 0) -- (-.5, 1) -- (0,0 ) -- (-1, 0);
				\path[fill = gray!30] (0,0) -- (-0.5,1) -- (.5,1)--(0,0);
				\path[fill = gray!30] (0,0) -- (.5,1) -- (1, 0)--(0,0);
				\path[fill = gray!30] (-.5,1) -- (.5, 1) -- (0,2)-- (-.5,1);

				\draw[fill] (-1,0) circle [radius = 0.04];
				\draw[fill] (0,0) circle [radius = 0.04];
				\draw[fill] (-.5,1) circle [radius = 0.04];
				\draw[fill] (.5,1) circle [radius = 0.04];
				\draw[fill] (1,0) circle [radius = 0.04];
				\draw[fill] (0,2) circle [radius = 0.04];
				
				\draw (-1, 0) -- (-.5, 1) -- (0,0 ) -- (-1, 0);
				\draw (0,0) -- (-0.5,1) -- (.5,1)--(0,0);
				\draw (0,0) -- (.5,1) -- (1, 0)--(0,0);
				\draw (-.5,1) -- (.5, 1) -- (0,2)-- (-.5,1);
				
				\node[below] at (-1,0) {$x_1$};
				\node[below] at (0,0) {$x_3$};
				\node[below] at (1,0) {$x_5$};
				\node[left] at (-.5,1) {$x_2$};
				\node[right] at (.5,1) {$x_4$};
				\node[above] at (0,2) {$x_6$};
			\end{tikzpicture}
		\end{center}
		\caption{Simplicial complex $\Gamma$}
\end{figure}
\end{example}

\begin{definition}
Let $\Delta$ be a simplicial complex. For $k\geq 3$, we say that $\Delta$  has {\it a simplicial $k$-point} $\p_k$ if there is a vertex $x_t\in V$ and a subset  $\{x_{j_1},\dots,x_{j_k}\}$ of  $V$ such that for each $1\leq i\leq k$ we have $\{x_t,x_{j_1},\dots,\hat{x}_{j_i},\dots,x_{j_k}\}\in\Delta$ and $\{x_t,x_{j_1},\dots,x_{j_k}\}\notin\Delta$. Also, we say that $\Delta$ has a simplicial $2$-point $\p_2$ if and only if it contains an $\s_3$.
\end{definition}

\begin{example}\label{E2}
	Let $\Theta$ be a simplicial complex as the following:
	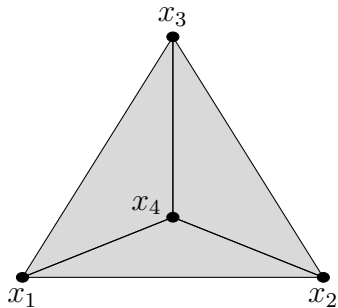
\begin{figure}[H]
		\begin{center}
				\begin{tikzpicture}[x = 1cm, y = .8cm]
			\path[fill = gray!30] (0, 0) -- (-2, -1) -- (0, 3) -- (0, 0);
			\path[fill = gray!30] (0, 0) -- (-2, -1) -- (2, -1) -- (0, 0);
			\path[fill = gray!30] (0, 0) -- (2, -1) -- (0, 3) -- (0, 0);
			
			\draw[fill] (0,0) circle [radius = 0.08];
			\draw[fill] (-2, -1) circle [radius = 0.08];
			\draw[fill] (2, -1) circle [radius = 0.08];
			\draw[fill] (0, 3) circle [radius = 0.08];
			
			\draw (-2, -1) -- (0, 0) -- (0,3);
			\draw (-2, -1) -- (0, 0) -- (2, -1);
			\draw (2, -1) -- (0, 0) -- (0, 3);
			\draw (-2, -1) -- (2, -1) -- (0, 3)-- (-2,-1);
			
			\node[below] at (-2, -1) {$x_1$};
			\node[above] at (0, 3) {$x_3$};
			\node[left] at (0, .2) {$x_4$};
			\node[below] at (2,-1) {$x_2$};
		\end{tikzpicture}
		\end{center}
			\caption{Simplicial complex $\Theta.$}
	\end{figure}
Then $\Theta$ has $\p_3$ since $x_4\in V$ and $\{x_1,x_2,x_4\}$, $\{x_1,x_3,x_4\}$, $\{x_2,x_3,x_4\}$ are in $\Theta$ and $\{x_1,x_2,x_3,x_4\}\notin\Theta$.
\end{example}

\begin{proposition}\label{P1}
If $\Ind(G)$ is a quasi-forest simplicial complex. Then $G$ does not contain an induced cycle $C_k$ for all $k\geq 5$.
\end{proposition}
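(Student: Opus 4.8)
The plan is to argue by contradiction using the chordality characterization in Theorem~\ref{T0}(3). Since $\Ind(G)$ is quasi-forest, that theorem gives that $\overline{G}=\Delta(1)$ is chordal, so $\overline{G}$ has no chordless cycle of length at least $4$. I would suppose, for contradiction, that $G$ contains an induced cycle $C_k$ with $k\ge 5$, on a vertex set $W=\{y_1,\dots,y_k\}$ whose only edges in $G$ are the cyclic ones $y_iy_{i+1}$ (indices mod $k$), and then produce a chordless cycle of length $\ge 4$ inside $\overline{G}$, which contradicts chordality.

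The key observation is that complementation commutes with passing to an induced subgraph: for $u,v\in W$, $\{u,v\}$ is an edge of the subgraph of $\overline{G}$ induced on $W$ iff $\{u,v\}\notin E(G)$ iff (because $C_k$ is induced) $\{u,v\}$ is not a cyclic edge. Hence the subgraph of $\overline{G}$ induced on $W$ is exactly $\overline{C_k}$. Consequently any chordless cycle found inside $\overline{C_k}$ is automatically chordless in $\overline{G}$, since $\overline{C_k}$ is an induced subgraph of $\overline{G}$. So it suffices to exhibit a chordless cycle of length $\ge 4$ in $\overline{C_k}$.

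I would split this into two cases. For $k=5$ one has $\overline{C_5}\cong C_5$, an induced $5$-cycle, and we are done. For $k\ge 6$ I claim the four vertices $y_1,y_2,y_4,y_5$ induce a $4$-cycle in $\overline{C_k}$: the four pairs $\{y_1,y_4\},\{y_1,y_5\},\{y_2,y_4\},\{y_2,y_5\}$ are non-adjacent in $C_k$ (hence are edges of $\overline{C_k}$), whereas the two diagonals $\{y_1,y_2\}$ and $\{y_4,y_5\}$ are cyclic edges of $C_k$ (hence non-edges of $\overline{C_k}$). Thus these four vertices span the complete bipartite graph $K_{2,2}=C_4$ with parts $\{y_1,y_2\}$ and $\{y_4,y_5\}$, a chordless $4$-cycle. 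In either case $\overline{G}$ acquires a chordless cycle of length $\ge 4$, contradicting Theorem~\ref{T0}(3); this forces $G$ to contain no induced $C_k$ with $k\ge 5$.

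The only genuine work — and the step I expect to be the main obstacle — is the combinatorial verification for $k\ge 6$ that the four cross-pairs are pairwise non-adjacent while the two diagonals are adjacent in $C_k$; this reduces to checking that the cyclic distances $y_1\!\to\! y_4$, $y_1\!\to\! y_5$, $y_2\!\to\! y_4$, $y_2\!\to\! y_5$ are all at least $2$, while $y_1\!\to\! y_2$ and $y_4\!\to\! y_5$ have distance $1$. The hypothesis $k\ge 6$ is precisely what guarantees that the indices $1,2,4,5$ are distinct and that $\{y_1,y_5\}$ is not a cyclic edge — which is exactly the reason the borderline case $k=5$ (where $\{y_1,y_5\}$ \emph{is} an edge of $C_5$) must be treated separately via $\overline{C_5}\cong C_5$.
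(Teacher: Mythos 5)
Your proof is correct and follows essentially the same route as the paper: both reduce to chordality of $\overline{G}$ via Theorem~\ref{T0}(3), handle $k=5$ by noting $\overline{C_5}\cong C_5$, and handle $k\geq 6$ by exhibiting the induced $4$-cycle on the complements of the two "gap" edges $y_1y_2$ and $y_4y_5$. Your write-up is in fact somewhat more careful than the paper's, which asserts the $C_4$ in $\overline{G}$ without explicitly verifying that the four cross-pairs are non-edges of $G$ and that the cycle is chordless.
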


\begin{proof}
Suppose $G$ contains an induced cycle $C_k:{x_{i_1}x_{i_2}},\dots,{x_{i_{k-1}}x_{i_k}},{x_{i_k}x_{i_1}},$ $k\geq 5$. Then we have two cases:
\begin{enumerate}
\item[(1)]: if $k=5$, then we have a cycle ${x_{i_1}x_{i_3}},{x_{i_1}x_{i_4}},{x_{i_2}x_{i_4}},{x_{i_2}x_{i_5}},{x_{i_3}x_{i_5}}$ in $\overline{G}$. Hence by Theorem \ref{T0}, $\Ind(G)$ is not quasi-forest and this is a contradiction.
\item[(2)]: if $k\geq 6$, then $x_{i_1}x_{i_2}$ and $x_{i_4}x_{i_5}$ are in $G$. Hence $\overline{G}$ contains a cycle $C_4$ and so $\overline{G}$ is not chordal. By Theorem \ref{T0}, $\Ind(G)$ is not quasi-forest and this is a contradiction.
\end{enumerate}
This completes the proof.
\end{proof}

From the above proposition one can deduce that every graph containing $C_k$ ($k\geq 5$) is not quasi-forest.

\begin{corollary}
If $\Ind(G)$ is quasi-forest and $G$ does not have an induced $3$-cycle and any isolated vertex, then $G$ is a bipartite graph. In particular, $G$ is a Ferrers graph.
\end{corollary}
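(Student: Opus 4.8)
The plan is to prove bipartiteness by ruling out all odd cycles, and then to invoke the classification of bipartite graphs with a $2$-linear resolution. First I would record what the hypotheses give directly. Since $\Ind(G)$ is quasi-forest, Proposition \ref{P1} shows that $G$ has no induced cycle $C_k$ for any $k\geq 5$; in particular $G$ has no induced $C_5,C_7,C_9,\dots$. Combining this with the standing assumption that $G$ has no induced $3$-cycle, we conclude that $G$ contains no induced cycle of odd length.

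The main step is to upgrade ``no induced odd cycle'' to ``no odd cycle at all''. Suppose, for contradiction, that $G$ contains an odd cycle, and choose one, say $C$, of minimum length. If $C$ had a chord, that chord would split $C$ into two cycles whose lengths add up to $|C|+2$; since $|C|$ is odd, exactly one of these two cycles has odd length, and it is strictly shorter than $C$, contradicting the minimality of $C$. Hence a shortest odd cycle is necessarily chordless, i.e.\ induced, which contradicts the conclusion of the previous paragraph. Therefore $G$ has no odd cycle, and so $G$ is bipartite.

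It remains to identify $G$ as a Ferrers graph. Because $\Ind(G)$ is quasi-forest, the discussion following Terai's theorem gives $\reg I(G)=2$; as $I(G)$ is generated in degree $2$, it has a $2$-linear resolution. Since $G$ is bipartite and, by hypothesis, has no isolated vertices, the theorem of Corso and Nagel \cite[Theorem 4.2]{CN} applies and yields, after a suitable relabeling of the vertices, that $G$ is a Ferrers graph.

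I expect the only genuinely delicate point to be the second paragraph: the elementary but essential observation that the shortest odd cycle of a graph is induced (a chord always produces a strictly shorter odd cycle). Everything else is a direct application of Proposition \ref{P1}, Theorem \ref{T0}, and the Corso--Nagel classification, so the argument should be short once this reduction is in place.
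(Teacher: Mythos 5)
Your proof is correct and follows essentially the same route as the paper: Proposition \ref{P1} plus the hypothesis rule out induced odd cycles, hence all odd cycles, hence $G$ is bipartite. You merely make explicit two points the paper leaves implicit --- that a shortest odd cycle is necessarily chordless, and that the Ferrers conclusion comes from $\reg I(G)=2$ together with the Corso--Nagel theorem --- both of which are handled correctly.
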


\begin{proof}
By Proposition \ref{P1} and our hypothesis, $G$ does not have any odd cycle. Thus $G$ is a bipartite graph.
\end{proof}

Let $I$ be a monomial ideal, in the following we use $\g(I)$ the unique minimal set of monomial generators of $I$.
\begin{proposition}\label{C12}
A simplicial complex $\Delta$ is flag if and only if it does not have any $\p_k$ for $k\geq 2$.
\end{proposition}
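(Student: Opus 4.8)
The plan is to recast the flag condition in terms of minimal nonfaces and then match these against the combinatorial data of a $\p_k$. Recall that $\Delta$ is flag exactly when every minimal nonface has cardinality $2$; equivalently, $\Delta$ fails to be flag precisely when it possesses a minimal nonface of size at least $3$. Hence the proposition reduces to the equivalence: $\Delta$ has a minimal nonface of size $\geq 3$ if and only if $\Delta$ has a $\p_k$ for some $k\geq 2$. I would state this reformulation first and then prove each implication by contraposition.

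For the direction ``not flag $\Rightarrow$ has a $\p_k$'', I would start from a minimal nonface $F$ with $|F|=k+1\geq 3$. Choosing any vertex of $F$ to play the role of $x_t$ and labelling the remaining ones $x_{j_1},\dots,x_{j_k}$, minimality of $F$ forces every proper subset, in particular each $F\setminus\{x_{j_i}\}$, to lie in $\Delta$, while $F\notin\Delta$; when $k\geq 3$ this is exactly a $\p_k$. The boundary case $k=2$ (so $|F|=3$) is absorbed by the convention $\p_2\Leftrightarrow\s_3$: the three vertices of $F$ are pairwise joined by $1$-faces, hence form an induced $3$-cycle in $\Delta(1)$, and $F\notin\Delta$ says no facet contains all three, which is precisely an $\s_3$.

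The reverse direction ``has a $\p_k$ $\Rightarrow$ not flag'' is where the real work lies, since the defining data of a $\p_k$ is weaker than that of a minimal nonface: it only guarantees that the $k$ sets $F\setminus\{x_{j_i}\}$ lie in $\Delta$, not that \emph{every} proper subset of $F:=\{x_t,x_{j_1},\dots,x_{j_k}\}$ does. The key observation is that every $2$-element subset $\{u,v\}\subseteq F$ is nonetheless a face: because $k\geq 3$, there are at least three indices $j_i$, so one can always delete some $x_{j_i}$ distinct from both $u$ and $v$, placing $\{u,v\}$ inside $F\setminus\{x_{j_i}\}\in\Delta$. Consequently the minimal nonface contained in the nonface $F$ cannot have size $2$ (nor size $\leq 1$), so it has size $\geq 3$ and $\Delta$ is not flag. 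The case $k=2$ is immediate from the $\s_3$ description, which directly exhibits a minimal nonface of size $3$.

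I expect the main obstacle to be the bookkeeping in this reverse direction, specifically verifying that for every pair $\{u,v\}\subseteq F$ there exists an index $i$ with $\{u,v\}\subseteq F\setminus\{x_{j_i}\}$. This is exactly the point where the hypothesis $k\geq 3$ is essential, and it must be handled in tandem with the separate $k=2$ case routed through the $\p_2\Leftrightarrow\s_3$ convention, so that both subcases combine to contradict flagness.
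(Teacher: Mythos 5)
Your proof is correct and follows essentially the same route as the paper: both directions translate between minimal nonfaces (equivalently, minimal monomial generators of $I_\Delta$) and the data of a $\p_k$, with the size-$3$ case handled via the $\p_2\Leftrightarrow\s_3$ convention. The only difference is in the reverse direction for $k\geq 3$, where the paper splits into cases according to whether $\{x_{j_1},\dots,x_{j_k}\}\in\Delta$ and exhibits an explicit minimal generator in each case, whereas you avoid that case analysis by checking that every $2$-subset of $\{x_t,x_{j_1},\dots,x_{j_k}\}$ is a face and concluding abstractly that the minimal nonface it contains has size at least $3$ --- a slightly cleaner but equivalent argument.
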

\begin{proof}
	Suppose that $\Delta$ is not flag. Then there is a monomial $x_{j_1}\dots x_{j_k}\in\g(I_\Delta)$ for $k\geq 3$. It follows that $\{x_{j_2},x_{j_3},\dots,x_{j_{k}}\}$, $\{x_{j_1},x_{j_3},\dots,x_{j_k}\}$, $\dots$, $\{x_{j_1},x_{j_2}\dots,x_{j_{k-1}}\}$ are faces of $\Delta$ and $\{x_{j_1},\dots,x_{j_k}\}\notin\Delta$. Hence $\Delta$ has a $\p_{k-1}$ for $k\geq3$.
	
	Conversely, assume $\Delta$ has a $\p_{k}$. If $k=2$, then there are faces $\{x_{j_1},x_{j_2}\},\{x_{j_2},x_{j_3}\}$ and $\{x_{j_1},x_{j_3}\}$ of $\Delta$ such that $\{x_{j_1},x_{j_2},x_{j_3}\}\notin\Delta$. Hence $x_{j_1}x_{j_2}x_{j_3}\in\g(I_\Delta)$. If $k\geq 3$, then there is a vertex $x_t\in V$ and  $\{x_{j_1},\dots,x_{j_{k}}\}\subset V$ such that $\{x_t,x_{j_1},\dots,\hat{x}_{j_i},\dots,x_{j_{k}}\}$ are faces of $\Delta$ for $i=1,\dots,k$. If $\{x_{j_1},\dots,x_{j_{k}}\}\notin\Delta$, then $x_{j_1}\dots x_{j_{k}}\in\g(I_\Delta)$. Now if $\{x_{j_1},\dots,x_{j_{k}}\}\in\Delta$, then $x_tx_{j_1}\dots x_{j_{k}}\in\g(I_\Delta)$ since $\{x_t,x_{j_1},\dots,x_{j_{k}}\}\notin\Delta$. This completes the proof.
\end{proof}

\begin{remark}
Consider Examples \ref{E1} and \ref{E2}, the simplicial complexes $\Delta$ and $\Theta$ are not flag since they contain an $\s_3$ and a $\p_3$, respectively. However, the simplicial complex $\Gamma$ is flag since $\Gamma$ does not contain any $\p_k$.
\end{remark}

\begin{Theorem}\label{T11}
A simplicial complex $\Delta$ is quasi-forest if and only if it does not have any $\p_k$ and any $\s_k$.
\end{Theorem}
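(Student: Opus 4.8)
The plan is to reduce the biconditional to the two structural facts already established in this section: flagness is governed by the $\p_k$, while chordality of the $1$-skeleton is governed by the $\s_k$. Recall that a quasi-forest is flag and that, by Proposition \ref{C12}, a simplicial complex is flag exactly when it has no $\p_k$ with $k\ge 2$. So I would first arrange to be working with a flag complex, in which case $I_\Delta$ is quadratic and equals $I(G)$ for the graph $G=\overline{\Delta(1)}$; Theorem \ref{T0}(3) then says that $\Delta$ is quasi-forest if and only if $\Delta(1)=\overline{G}$ is chordal. The whole argument is thus a matter of matching "$\Delta$ has no $\s_k$" with "$\Delta(1)$ is chordal" on the class of flag complexes.

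For the forward direction I would assume $\Delta$ is quasi-forest. Then $\Delta$ is flag, so Proposition \ref{C12} immediately rules out every $\p_k$ with $k\ge 2$; in particular there is no $\p_2$, hence no $\s_3$. Writing $I_\Delta=I(G)$ as above and invoking Theorem \ref{T0}(3), the skeleton $\Delta(1)$ is chordal, so it contains no chordless cycle of length $\ge 4$. Combined with the observation made in the final paragraph below, this yields that $\Delta$ has no $\s_k$ for any $k\ge 3$, finishing this direction.

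For the converse I would assume $\Delta$ has neither $\p_k$ nor $\s_k$. The absence of $\p_k$ (for $k\ge 2$) makes $\Delta$ flag by Proposition \ref{C12}, so once more $I_\Delta=I(G)$ with $G=\overline{\Delta(1)}$. Arguing by contradiction, if $\Delta$ were not quasi-forest then Theorem \ref{T0}(3) would force $\Delta(1)$ to be non-chordal, producing a chordless cycle $C_k$ with $k\ge 4$; I would then show that this cycle is an $\s_k$, contradicting the hypothesis. Hence $\Delta(1)$ is chordal and $\Delta$ is quasi-forest.

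The only genuinely combinatorial step—and the part I expect to need the most care—is identifying, for $k\ge 4$, the simplicial $k$-cycles with the chordless $k$-cycles of $\Delta(1)$. The point is that a chordless cycle $C_k$ with $k\ge 4$ contains no triangle, so no three of its vertices are pairwise adjacent in $\Delta(1)$; but three vertices lying in a common facet of $\Delta$ would be pairwise faces, hence pairwise adjacent in $\Delta(1)$, forcing such a triangle. Thus the condition "no more than two vertices in a common facet" is automatic as soon as $k\ge 4$, so in that range an $\s_k$ is nothing but a chordless $k$-cycle of $\Delta(1)$. For $k=3$ this facet condition is not free, but the case $k=3$ is already subsumed by the $\p_k$ side through the definitional equivalence $\s_3\Leftrightarrow\p_2$, which is precisely why treating flagness and chordality separately suffices to account for all $\s_k$ with $k\ge 3$.
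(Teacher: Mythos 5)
Your proof is correct and takes essentially the same route as the paper's: Proposition \ref{C12} disposes of the $\p_k$/flagness side, Theorem \ref{T0} reduces the rest to chordality of $\Delta(1)$, and your key combinatorial step (three vertices of an induced cycle of length at least $4$ in a common facet would force a chord) is exactly the observation the paper uses in its converse direction. The only cosmetic difference is that you package this observation as an identification of $\s_k$ ($k\geq 4$) with chordless $k$-cycles of $\Delta(1)$, while the paper applies it directly inside a proof by contradiction.
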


\begin{proof}
$(\Longrightarrow)$. Suppose $\Delta$ has a $\p_k$ such that $k\geq 2$. By Proposition \ref{C12} we obtain that $\Delta$ is not flag. Hence $\Delta$ is not quasi-forest and this is a contradiction. Suppose that $\Delta$ contains an $\s_k$ for $k\geq 4$. Then $C_k$ is in $\Delta(1)$. Hence $\overline{G}$ is not chordal. Hence by Theorem \ref{T0} it follows that $\Delta$ is not quasi-forest and this is a contradiction. Therefore $\Delta$ does not have any $\p_k$ and any $\s_k$.\\
$(\Longleftarrow)$. Suppose, by contrary, that $\Delta$ is not quasi-forest. Thus $\Delta$ is not flag or $\Delta(1)$ is not chordal. If $\Delta$ is not flag, then Proposition \ref{C12} implies that $\Delta$ has a $\p_k$ for $k\geq2$, a contradiction. If $\Delta(1)$ is not chordal, then $\Delta(1)$ contains $C_k, k\geq 4$. By assumption we may assume that at least three vertices of $C_k$ is a face in $\Delta$, say $F$. Then the 1-faces of $F$ are in $\Delta(1)$. It follows that $C_k$ has a chord and this is a contradiction. Therefore $\Delta$ is quasi-forest.
\end{proof}

\begin{remark}
The simplicial complexes $\Delta$ and $\Theta$ in Examples \ref{E1}, \ref{E2} are not quasi-forest since they contain an $\s_3$ and a $\p_3$, respectively. The simplicial complex $\Gamma$ in Example \ref{E1} is quasi-forest since it does not have any $\p_k$ and any $\s_k$.
\end{remark}
The following result immediately follows by Theorem \ref{T11}.
\begin{corollary}\label{T12}
	A simplicial complex $\Delta$ is forest if and only if any subcomplex $\Gamma$ of $\Delta$ does not have a $\p_k$ and an $\s_k$.
\end{corollary}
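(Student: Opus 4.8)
The plan is to reduce the corollary to Theorem~\ref{T11} applied to every subcomplex, via the bridging fact that a complex is a forest exactly when all of its subcomplexes are quasi-forests. To set this up I would first record two elementary observations. First, every quasi-forest $\Gamma=\lr{F_1,\dots,F_r}$ (with each $F_i$ a leaf of $\lr{F_1,\dots,F_i}$) possesses at least one leaf, namely $F_r$, which by definition is a leaf of $\lr{F_1,\dots,F_r}=\Gamma$. Second, any subcomplex of a forest is again a forest: if every subcomplex of $\Delta$ has a leaf and $\Gamma$ is a subcomplex of $\Delta$, then each subcomplex of $\Gamma$ is also a subcomplex of $\Delta$, hence has a leaf.

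Next I would establish the key equivalence that $\Delta$ is a forest if and only if every subcomplex of $\Delta$ is a quasi-forest. For the forward direction, if $\Delta$ is a forest then each subcomplex $\Gamma$ is itself a forest by the second observation; and since every forest is a quasi-forest (one produces the leaf order by repeatedly stripping off a leaf, each remaining subcomplex still being a forest and hence still having a leaf), each such $\Gamma$ is a quasi-forest. For the converse, if every subcomplex $\Gamma$ is a quasi-forest, then by the first observation each $\Gamma$ has a leaf, which is precisely the definition of $\Delta$ being a forest.

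Finally I would invoke Theorem~\ref{T11}. Because that theorem characterizes an \emph{arbitrary} simplicial complex as quasi-forest exactly when it has neither a $\p_k$ nor an $\s_k$, applying it to each subcomplex $\Gamma$ shows that $\Gamma$ is a quasi-forest if and only if $\Gamma$ has no $\p_k$ and no $\s_k$. Quantifying over all subcomplexes and combining with the key equivalence yields the statement. The only point needing care is the forward direction of the key equivalence, i.e.\ that subcomplexes of forests are forests and that forests are quasi-forests; both follow directly from the definitions by the leaf-removal argument, after which the result is a purely formal consequence of Theorem~\ref{T11}.
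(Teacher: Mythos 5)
Your proposal is correct and follows exactly the route the paper intends: the paper states the corollary as an immediate consequence of Theorem~\ref{T11}, and your bridging equivalence (forest $\iff$ every subcomplex is a quasi-forest, via leaf-stripping and the fact that a quasi-forest's last facet in the leaf order is a leaf) is precisely the detail being left implicit. No discrepancy with the paper's argument.
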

In Example \ref{E1}, the simplicial complex $\Gamma$ is not forest since if we remove the facet $\{x_2,x_3,x_4\}$, then $\Gamma$ is equal to $\Delta$ and $\Delta$ contains an $\s_3$.

\begin{definition}[{\cite[Definition 2.15]{Z},\cite[Definition 2.1]{DHS}}]
Let $G$ be a graph. Two edges $xy$ and $zu$ form a {\it gap} in $G$ if $G$ does not have an edge with one endpoint in $xy$ and the other in $zu$.
A graph without gaps is called {\it gap-free}. Equivalently, $G$ is gap-free if and only if $C_4$ is not an induced subgraph of $\overline{G}$.
\end{definition}

The following result immediately follows by Theorem \ref{T11}.

\begin{corollary}\label{C14}
If $\Delta=\Ind(G)$ is quasi-forest, then $G$ is a gap-free graph.
\end{corollary}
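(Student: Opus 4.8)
The plan is to argue by contradiction: assume $G$ is not gap-free, produce a simplicial $4$-cycle $\s_4$ inside $\Delta$, and then invoke Theorem \ref{T11}, which forbids any $\s_k$ once $\Delta$ is quasi-forest. First I would unwind the definitions. By the stated characterization of gap-freeness, $G$ failing to be gap-free means $\overline{G}$ contains an induced $C_4$, say on vertices $a,b,c,d$ with cycle edges $ab,bc,cd,da\in E(\overline{G})$ and diagonals $ac,bd\notin E(\overline{G})$. By Theorem \ref{T0}(2) we have $\overline{G}=\Delta(1)$, so this $C_4$ is an induced $4$-cycle sitting inside $\Delta(1)$; equivalently, the four cycle pairs are $1$-faces of $\Delta$, while $\{a,c\},\{b,d\}\notin\Delta$.

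Next I would verify that this induced $C_4$ is in fact an $\s_4$. The only extra condition to check is that no three of the vertices $a,b,c,d$ lie in a common facet of $\Delta$. This is the crux, and it is where the missing diagonals do the work: among any three vertices of a $4$-cycle, at least one pair is a diagonal, i.e.\ a non-adjacent pair of the cycle. For instance $\{a,b,c\}$ contains the diagonal $\{a,c\}$, and $\{b,c,d\}$ contains $\{b,d\}$; the two remaining triples are symmetric. Since $\{a,c\},\{b,d\}\notin\Delta$ and $\Delta$ is closed under passing to subsets, no face of $\Delta$ — in particular no facet — can contain either diagonal pair, hence none can contain three of the cycle vertices. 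Thus the $C_4$ satisfies the defining condition of $\s_4$.

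Having produced an $\s_4$ in $\Delta$, I would apply Theorem \ref{T11}: a quasi-forest has no $\s_k$, contradicting the hypothesis that $\Delta=\Ind(G)$ is quasi-forest. Therefore $\overline{G}$ has no induced $C_4$ and $G$ is gap-free. I expect the only place requiring care to be the bookkeeping of the $\s_4$ condition, namely checking that each triple of cycle vertices contains a diagonal; this is genuinely routine, and the remainder is just translation between the graph language of $G$, $\overline{G}$ and the face language of $\Delta$. As a sanity check I would record a second, even shorter route that bypasses $\s_4$ entirely: by Theorem \ref{T0}(3), quasi-forest forces $\Delta(1)=\overline{G}$ to be chordal, and a chordal graph has no induced cycle of length $\geq 4$, hence no induced $C_4$, which immediately gives gap-freeness.
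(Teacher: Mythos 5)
Your argument is correct and matches the paper's intended derivation: the paper states only that the corollary ``immediately follows by Theorem \ref{T11}'', and your construction of an $\s_4$ from an induced $C_4$ in $\overline{G}=\Delta(1)$ (using the missing diagonals to rule out three cycle vertices sharing a facet) is precisely the verification being left implicit. Your alternative route via chordality of $\Delta(1)$ from Theorem \ref{T0}(3) is also valid and equally consistent with the paper's framework.
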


We recall the definition of a {\it perfect graph} and a {\it Berge graph} introduced in \cite{CRST}: A graph $G$ is perfect if for every induced subgraph $H$, the chromatic
number of $H$ equals the size of the largest complete subgraph of $H$, and $G$ is Berge if and only if neither it nor its complementary graph has an odd induced cycle of length at least five. Chudnovsky et al. in \cite{CRST} proved that a graph is perfect if and only if it is Berge.

\begin{corollary}\label{C15}
If $\Delta=\Ind(G)$ is a quasi-forest, then $G$ is a perfect graph.
\end{corollary}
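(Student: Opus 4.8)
The plan is to route the statement through the Berge characterization recorded just above: by the theorem of Chudnovsky et al., $G$ is perfect if and only if it is Berge, that is, if and only if neither $G$ nor $\overline{G}$ contains an odd induced cycle of length at least five (an odd hole). So I would reduce the claim to verifying the two halves of the Berge condition separately, one for $G$ and one for its complement.

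The $G$-side is immediate from Proposition \ref{P1}. Since $\Delta=\Ind(G)$ is assumed quasi-forest, that proposition tells us $G$ has no induced cycle $C_k$ for any $k\geq 5$; in particular $G$ has no odd hole at all.

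The $\overline{G}$-side follows from Theorem \ref{T0}(3), which states that quasi-forestness of $\Delta=\Ind(G)$ is equivalent to $\overline{G}=\Delta(1)$ being chordal. A chordal graph contains no chordless cycle of length $\geq 4$ whatsoever, so a fortiori $\overline{G}$ has no odd induced cycle of length $\geq 5$. Having checked both halves, $G$ is Berge, hence perfect.

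There is essentially no technical obstacle here: once the Berge characterization is in hand the argument is pure bookkeeping, and the only point worth flagging is that chordality of $\overline{G}$ settles the complement side in a single stroke, since it rules out all long induced cycles rather than merely the odd ones. For completeness I would remark that the complement side also admits a more direct route, namely that chordal graphs are perfect, so $\overline{G}$ is perfect and the Weak Perfect Graph Theorem then yields perfection of $G$; but since the paper has already recorded the Berge characterization, I would keep the exposition uniform and argue through it.
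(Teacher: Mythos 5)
Your proposal is correct and follows essentially the same route as the paper: both verify the Berge condition by using Proposition \ref{P1} to rule out long induced cycles in $G$ and the chordality of $\overline{G}=\Delta(1)$ (which the paper extracts via Theorem \ref{T11} and you extract directly from Theorem \ref{T0}(3)) to handle the complement, then invoke the strong perfect graph theorem. The difference in which equivalence you cite for the complement side is cosmetic, not a genuinely different argument.
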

\begin{proof}
From Proposition \ref{P1} and Theorem \ref{T11}, it follows that $G$ is a Berge graph and so $G$ is a perfect graph.
\end{proof}

Note that the converse of Corollaries \ref{C14}  and \ref{C15} are not true in general, for example, the graph $G$ of independence complex $\Ind(G)=\s_4$ is gap-free and perfect, however, $\Ind(G)$ is not quasi-forest.

Following \cite{W} a simplicial complex $\Delta$ is recursively defined to be {\it vertex decomposable} if it is either a simplex or else has some vertex $v$ so that
\begin{enumerate}
\item[(1)] both $\Delta\setminus v$ and $\link_{\Delta}v$ are vertex decomposable, and
\item[(2)] no face of $\link_{\Delta}v$ is a facet of $\Delta\setminus v$, where $\link_{\Delta}v=\{F\in\Delta\mid F\cup\{v\}\in\Delta, v\notin F\}.$
\end{enumerate}

Woodroofe in \cite[Theorem 1]{W} proved the following theorem:

\begin{Theorem}\label{T1}
If $G$ is a graph with no chordless cycle of length other than $3$ or $5$, then $G$ is vertex decomposable (hence shellable and sequentially Cohen-Macaulay).
\end{Theorem}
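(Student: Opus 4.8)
The plan is to prove the stronger, concrete statement that the independence complex $\Ind(G)$ is vertex decomposable, from which shellability and sequential Cohen-Macaulayness follow by the standard implications (vertex decomposable $\Rightarrow$ shellable $\Rightarrow$ sequentially Cohen-Macaulay). I would induct on $|V(G)|$. The base case is an edgeless graph, where $\Ind(G)$ is a simplex and hence vertex decomposable by definition. For the inductive step I use the two standard identities $\Ind(G)\setminus v=\Ind(G\setminus v)$ and $\link_{\Ind(G)}v=\Ind(G\setminus N_G[v])$, where $G\setminus v$ and $G\setminus N_G[v]$ denote the induced subgraphs of $G$ on $V\setminus\{v\}$ and on $V\setminus N_G[v]$. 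Both are induced subgraphs of $G$, and the hypothesis ``no chordless cycle of length other than $3$ or $5$'' is inherited by induced subgraphs; hence by the induction hypothesis both $\Ind(G\setminus v)$ and $\Ind(G\setminus N_G[v])$ are vertex decomposable for every $v$. Thus the whole problem reduces to exhibiting a single \emph{shedding vertex}, i.e. a vertex $v$ for which condition (2) in the definition of vertex decomposability holds.

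Next I would translate the shedding condition into graph language. For independence complexes, no face of $\link_{\Ind(G)}v$ is a facet of $\Ind(G)\setminus v$ precisely when there is no independent set $S$ of $G$ with $S\cap N_G[v]=\emptyset$ that dominates $N_G(v)$, meaning every neighbour of $v$ has a neighbour in $S$. A convenient sufficient condition is closed-neighbourhood domination: if there are distinct vertices $u,w$ with $N_G[u]\subseteq N_G[w]$, then $w$ is a shedding vertex, because any independent $S$ disjoint from $N_G[w]$ is also disjoint from $N_G[u]$, so $S\cup\{u\}$ is independent and $u\in N_G(w)$ witnesses the required augmentation. When $G$ is chordal this already finishes the argument: a simplicial vertex $u$ (one whose neighbourhood is a clique) satisfies $N_G[u]\subseteq N_G[w]$ for every neighbour $w$, so such a $w$ is a shedding vertex, recovering the chordal case.

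The main obstacle is that the closed-neighbourhood domination criterion can fail for our wider class: the $5$-cycle $C_5$ satisfies the hypothesis yet has no dominated pair, so one must invoke the full shedding criterion together with the cycle restriction. The strategy I would pursue is a contradiction argument: assume that $G$ has at least one edge but no shedding vertex. Then for every vertex $v$ there is an independent set $S_v$ disjoint from $N_G[v]$ that dominates $N_G(v)$. Choosing $v$ and a minimal such $S_v$ carefully (for instance with $v$ of extremal degree, or $S_v$ of least size), the alternating paths running from $v$ through its neighbours into $S_v$ and back should be forced to close up into a chordless cycle; the hypothesis then forbids this cycle unless it has length $3$ or $5$, and one argues that in those short residual cases the configuration either produces a dominated pair after all or is directly impossible. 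Making this extraction of a forbidden long chordless cycle precise, and disposing of the leftover $C_3$ and $C_5$ configurations, is the delicate heart of the proof; everything else is the routine induction and the translation of the shedding condition. This is exactly \cite[Theorem 1]{W}.
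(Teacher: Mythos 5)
The paper does not actually prove this statement: it is Woodroofe's Theorem~1, quoted verbatim with the citation \cite{W}, so there is no internal argument to compare yours against. Measured as a self-contained proof, your proposal has a genuine gap. The reductions you set up are all correct and standard: the induction on $|V(G)|$, the identities $\Ind(G)\setminus v=\Ind(G\setminus v)$ and $\link_{\Ind(G)}v=\Ind(G\setminus N_G[v])$, the observation that the hypothesis is inherited by induced subgraphs, the translation of the shedding condition into the non-existence of an independent set in $G\setminus N_G[v]$ dominating $N_G(v)$, and the closed-neighbourhood domination criterion that settles the chordal case via a simplicial vertex. But the entire content of the theorem beyond the chordal case is the claim that a graph containing a chordless $5$-cycle (and no other chordless cycles of length $\neq 3$) still possesses a shedding vertex, and at exactly that point your argument becomes a plan rather than a proof: ``the alternating paths \dots\ should be forced to close up into a chordless cycle'' and ``one argues that in those short residual cases the configuration either produces a dominated pair after all or is directly impossible.'' Nothing is exhibited: no choice of $v$ is specified, no construction of the alleged forbidden cycle is carried out, and the residual $C_3$/$C_5$ configurations are not disposed of. Your own example of $C_5$ shows the domination criterion genuinely fails there, so some new idea is required, and it is precisely the idea you have not supplied.

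If your intent is merely to invoke \cite[Theorem 1]{W} --- as your closing sentence suggests --- then you are doing exactly what the paper does, and the sketch of the underlying machinery is accurate but unnecessary. If your intent is to reprove the theorem, you must either carry out the extraction of a long chordless cycle from a minimal dominating set $S_v$ in full detail (specifying how $v$ and $S_v$ are chosen and why the paths close up chordlessly), or follow Woodroofe's actual route, which handles the non-chordal case by a direct analysis of a vertex lying on a chordless $5$-cycle rather than by the contradiction scheme you outline.
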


The following result immediately follows form Theorem \ref{T1} and Proposition \ref{P1}.

\begin{corollary}
If $\Delta=\Ind(G)$ is quasi-forest and $G$ does not contain an induced $4$-cycle, then $\Delta$ is a vertex decomposable (so shellable and sequentially Cohen-Macaulay).
\end{corollary}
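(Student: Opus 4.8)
The plan is to verify directly that the hypotheses of Woodroofe's Theorem~\ref{T1} hold for $G$, so that the stated conclusion transfers without further work. Recall first that, in this setting, calling a graph $G$ vertex decomposable means precisely that its independence complex $\Ind(G)=\Delta$ is vertex decomposable; thus Theorem~\ref{T1} asserts exactly that $\Delta$ is vertex decomposable (and hence shellable and sequentially Cohen-Macaulay) whenever $G$ has no chordless cycle of length other than $3$ or $5$. So the whole task reduces to pinning down the admissible chordless cycle lengths of $G$ under the present hypotheses.

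First I would record the cycle structure forced on $G$. Since $\Delta=\Ind(G)$ is quasi-forest, Proposition~\ref{P1} gives that $G$ contains no induced cycle $C_k$ for any $k\geq 5$; equivalently, $G$ has no chordless cycle of length $\geq 5$. Combining this with the standing assumption that $G$ contains no induced $4$-cycle, I conclude that $G$ has no chordless cycle of length $\geq 4$ at all. Hence the only chordless cycles $G$ can contain are triangles, i.e.\ cycles of length $3$.

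It then remains only to match this against Woodroofe's hypothesis. Since every chordless cycle of $G$ has length $3$, in particular $G$ has no chordless cycle of length other than $3$ or $5$, so Theorem~\ref{T1} applies and yields that $\Delta=\Ind(G)$ is vertex decomposable, hence shellable and sequentially Cohen-Macaulay, as required. The argument is a direct bookkeeping of admissible cycle lengths, so there is no genuine obstacle; the only point demanding a moment's care is the mismatch between the two forbidden-length conventions — Proposition~\ref{P1} rules out all lengths $\geq 5$, whereas Theorem~\ref{T1} permits length $5$ but forbids length $4$ — and observing that the extra assumption ``$G$ has no induced $4$-cycle'' is exactly what closes the remaining gap at length $4$ so that Woodroofe's criterion is met.
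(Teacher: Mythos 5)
Your proposal is correct and follows exactly the route the paper intends: the paper states that the corollary ``immediately follows from Theorem \ref{T1} and Proposition \ref{P1},'' and your argument simply spells out that combination, using Proposition \ref{P1} to exclude chordless cycles of length $\geq 5$ and the extra hypothesis to exclude length $4$, so that Woodroofe's criterion applies. No issues.
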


It can be noted form Corollary \ref{T12} that if $\Delta$ is a quasi-forest and contains at most three facets, then $\Delta$ is a forest.

\begin{corollary}
The simplicial complex $\Ind(C_k)$ is quasi-forest if and only if $k=3, 4$. In particular, $\Ind(C_k)$ is a forest if and only if $\Ind(C_k)$ is a quasi-forest.
\end{corollary}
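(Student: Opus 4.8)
The plan is to split the biconditional according to the size of $k$, handling the large cycles with Proposition \ref{P1} and the two small cases directly via Theorem \ref{T0}(3), and then to deduce the forest statement from the facet-count remark that precedes the corollary.

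First I would dispose of the direction that rules out long cycles. For every $k\geq 5$ the graph $C_k$ is, trivially, an induced cycle of length $k\geq 5$ in itself, so Proposition \ref{P1} immediately gives that $\Ind(C_k)$ is not quasi-forest. This already shows that quasi-forestness forces $k\in\{3,4\}$, which is the nontrivial implication of the equivalence.

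Next I would verify the two remaining cases by exhibiting the complements and invoking Theorem \ref{T0}(3). For $k=3$ the complement $\overline{C_3}$ is the edgeless graph on three vertices, which is vacuously chordal, so $\Ind(C_3)$ is quasi-forest; concretely $\Ind(C_3)=\lr{\{x_1\},\{x_2\},\{x_3\}}$. For $k=4$ the complement $\overline{C_4}$ consists of the two disjoint edges $x_1x_3$ and $x_2x_4$, hence has no cycle at all and is chordal, so $\Ind(C_4)$ is quasi-forest; here $\Ind(C_4)=\lr{\{x_1,x_3\},\{x_2,x_4\}}$. Together with the previous paragraph this establishes the claimed equivalence, namely that $\Ind(C_k)$ is quasi-forest precisely when $k\in\{3,4\}$.

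Finally, for the \emph{in particular} statement I would use that a forest is always a quasi-forest, so only the reverse implication needs proof. By what was just shown, $\Ind(C_k)$ is quasi-forest exactly when $k\in\{3,4\}$, and the facet descriptions above show that $\Ind(C_3)$ has exactly three facets while $\Ind(C_4)$ has exactly two. Since both have at most three facets, the remark preceding the corollary---that a quasi-forest with at most three facets is a forest---applies and yields that $\Ind(C_k)$ is a forest in each case. I do not expect a genuine obstacle here: the only step requiring care is the correct identification of the facets, that is, the maximal independent sets, of $C_3$ and $C_4$, since the whole argument rests on reading off these two complexes (and their complements) correctly.
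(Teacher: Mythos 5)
Your proposal is correct and follows essentially the same route as the paper: Proposition \ref{P1} eliminates $k\geq 5$, the cases $k=3,4$ are verified directly by identifying $\Ind(C_3)=\lr{\{x_1\},\{x_2\},\{x_3\}}$ and $\Ind(C_4)=\lr{\{x_1,x_3\},\{x_2,x_4\}}$, and the forest claim follows from the at-most-three-facets observation preceding the corollary. The only cosmetic difference is that you certify quasi-forestness for $k=3,4$ via chordality of the complement (Theorem \ref{T0}(3)) where the paper quotes $\reg(I_{\Ind(C_k)})=2$; these are equivalent by the paper's preliminaries.
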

\begin{proof}
If $k=3,4$, then it is immediately follows that $\Ind(C_k)$ is quasi-forest. Conversely, we consider the following cases:
\begin{enumerate}
\item[(1)] if $k=3$, then $\reg(I_{\Ind(C_3)})=2$ and $\Ind(C_3)=\lr{x_1,x_2,x_3}$. Thus it is clear that $\Ind(C_3)$ is a forest;
\item[(2)] if $k=4$, then $\reg(I_{\Delta(C_4)})=2$ and $\Ind(C_4)=\lr{x_1x_3,x_2x_4}$. It therefore follows that $\Ind(C_4)$ is a forest;
\item[(3)] if $k\geq 5$, then by Proposition \ref{P1} we get the result.
\end{enumerate}
\end{proof}

\section{The $h$-vector of Cohen-Macaulay quasi-forest}

Let $I$ be a homogeneous ideal of $R$ with $\dim R/I=d$. The Hilbert series of $R/I$ is of the form $H_{R/I}(t)=(h_0+h_1t+h_2t^2+\ldots+h_st^s)/{(1-t)^d}$, where each $h_i\in\mathbb{Z}$. The polynomial $h_{R/I}(t)=h_0+h_1t+h_2t^2+\ldots+h_st^s$ with $h_s\neq 0$ is called the {\it h-polynomial} of $R/I$ (see \cite[Theorem 6.1.3]{HH}). The {\it a-invariant} is the degree of the Hilbert series $H_{R/I}(t)$, that is, the number $s-d$.

From \cite[Corollary B.4.1]{Vs} we have $a(K[\Delta])\leq\reg(K[\Delta])-\dep(K[\Delta])$ with equality if $K[\Delta]$ is Cohen-Macaulay. Then we have the following inequality $\deg h_{K[\Delta]}(t)-\reg(K[\Delta])\leq\dim(K[\Delta])-\dep(K[\Delta]).$ The equality holds if $K[\Delta]$ is Cohen-Macaulay or $K[\Delta]$ has a pure resolution (see \cite[P. 153]{BH}). In particular, if $\Delta$ is a quasi-forest, then the equality holds.

\begin{proposition}(Compare with \cite[Proposition 2.3]{GPSY})\label{P3}
Let $\Delta$ be a $(d-1)$-dimensional simplicial quasi-forest with $h(\Delta)=(h_0,h_1,\dots,h_d)$. Then $\Delta$ is Cohen-Macaulay if and only if $h_{2},\dots,h_d$ are zero.
\end{proposition}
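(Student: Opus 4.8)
The plan is to extract everything from the degree of the $h$-polynomial, leaning on the identity recorded immediately before the statement, namely
\[
\deg h_{K[\Delta]}(t) - \reg K[\Delta] = \dim K[\Delta] - \dep K[\Delta],
\]
which holds for every quasi-forest because in that case $\reg I_\Delta = 2$ forces a $2$-linear (hence pure) resolution. First I would dispose of the degenerate single-facet case: if $\Delta$ is a simplex then $K[\Delta]$ is a polynomial ring, hence Cohen-Macaulay, and $h(\Delta) = (1,0,\dots,0)$, so both sides of the claimed equivalence hold trivially. So assume $\Delta$ has at least two facets; then $I_\Delta \neq 0$ and, since $\Delta$ is quasi-forest, $\reg I_\Delta = 2$, whence $\reg K[\Delta] = \reg I_\Delta - 1 = 1$, while $\dim K[\Delta] = d$. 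Substituting these two values into the displayed equality collapses it to the single clean identity
\[
\deg h_{K[\Delta]}(t) = d + 1 - \dep K[\Delta].
\]

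Next I would translate the combinatorial hypothesis on the $h$-vector into a statement about this degree. Since $h_0 = 1 \neq 0$, the $h$-polynomial is never identically zero, so the condition $h_2 = \dots = h_d = 0$ is literally the assertion $\deg h_{K[\Delta]}(t) \leq 1$. Feeding this through the identity above produces the chain
\[
h_2 = \dots = h_d = 0 \iff \deg h_{K[\Delta]}(t) \leq 1 \iff d + 1 - \dep K[\Delta] \leq 1 \iff \dep K[\Delta] \geq d.
\]
Because depth never exceeds Krull dimension, i.e. $\dep K[\Delta] \leq \dim K[\Delta] = d$, the inequality $\dep K[\Delta] \geq d$ is equivalent to $\dep K[\Delta] = d$, which is exactly the Cohen-Macaulay condition. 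Reading the chain in both directions then settles the proposition.

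The only genuine obstacle is to make sure the equality $\deg h - \reg = \dim - \dep$ is legitimately in force and that $\reg K[\Delta] = 1$; both are secured for non-simplex quasi-forests (the former from the pure/linear resolution granted in the excerpt, the latter from $\reg I_\Delta = 2$), after which the argument is pure bookkeeping. As an independent sanity check one may instead route through Fröberg's Theorem \ref{T00}: evaluating the $h$-polynomial at $t=1$ gives $\sum_{i=0}^{d} h_i = h_{K[\Delta]}(1) = f_{d-1}$, while $h_0 + h_1 = 1 + (f_0 - d) = n - d + 1$, so the equality $f_{d-1} = n - d + 1$ is exactly $h_2 + \dots + h_d = 0$; combining this with the classical nonnegativity of the $h$-vector of a Cohen-Macaulay complex recovers the same equivalence.
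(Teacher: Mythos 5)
Your proof is correct and follows essentially the same route as the paper: both arguments substitute $\reg(K[\Delta])=1$ into the identity $\deg h_{K[\Delta]}(t)-\reg(K[\Delta])=\dim(K[\Delta])-\dep(K[\Delta])$ (valid for quasi-forests via the linear resolution) and read off that $h_2=\dots=h_d=0$ is equivalent to $\dep K[\Delta]=d$. Your extra care with the simplex/$h_1=0$ case and the use of $\deg h\leq 1$ rather than $\deg h=1$ is a minor tightening of the paper's argument, not a different approach.
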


\begin{proof}
Since $\Delta$ is a quasi-forest, we have
$\deg h_{K[\Delta]}(t)=\dim(K[\Delta])-\dep(K[\Delta])+1.$
If $\Delta$ is Cohen-Macaulay, then $\deg h_{\K[\Delta]}(t)=1$. Therefore $h_2,\dots, h_d$ are zero.
Conversely, let $h_2,\dots, h_d$ be zero. Then
$1=\deg h_{K[\Delta]}(t)=\dim(K[\Delta])-\dep(K[\Delta])+1.$
Thus $\dim(K[\Delta])=\dep(K[\Delta])$ and so $\Delta$ is Cohen-Macaulay.
\end{proof}

\begin{Theorem}
Let $\Delta$ be a $(d-1)$-dimensional simplicial quasi-forest with $h(\Delta)=(h_0,h_1,\dots,h_d)$. Then $\Delta$ is almost Cohen-Macaulay if and only if $h_2$ is non-positive, and $h_{3},\dots,h_d$ are zero.
	
\end{Theorem}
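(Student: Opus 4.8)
The plan is to follow the strategy of Proposition~\ref{P3}: translate the homological condition into a statement about $\deg h_{K[\Delta]}(t)$, and then determine the sign of $h_2$ using Fr\"oberg's bound (Theorem~\ref{T00}). Recall that $\Delta$ being almost Cohen-Macaulay means $\dep(K[\Delta]) \ge \dim(K[\Delta]) - 1$; since $\dep(K[\Delta]) \le \dim(K[\Delta])$ always holds, this is equivalent to $\dim(K[\Delta]) - \dep(K[\Delta]) \in \{0,1\}$. As $\Delta$ is a quasi-forest we may use the identity
\[
\deg h_{K[\Delta]}(t) = \dim(K[\Delta]) - \dep(K[\Delta]) + 1,
\]
whose right-hand side is always $\ge 1$. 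Thus $\Delta$ is almost Cohen-Macaulay precisely when $\deg h_{K[\Delta]}(t) \in \{1,2\}$, and since $h_0 = 1 \ne 0$ this is exactly the condition $h_3 = \cdots = h_d = 0$.

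It remains to control the sign of $h_2$, which is where Theorem~\ref{T00} enters. First I would record the standard conversion formulas for a $(d-1)$-dimensional complex: $h_0 = 1$, $h_1 = f_0 - d = n - d$, and $\sum_{i=0}^{d} h_i = f_{d-1}$, the last obtained by evaluating $\sum_{i} f_{i-1}\, t^i (1-t)^{d-i}$ at $t = 1$. Fr\"oberg's inequality $f_{d-1} \le n - d + 1$ then rearranges into
\[
\sum_{i=2}^{d} h_i = f_{d-1} - h_0 - h_1 = f_{d-1} - (n - d + 1) \le 0,
\]
with equality exactly when $K[\Delta]$ is Cohen-Macaulay.

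With these two ingredients both directions follow at once. For the forward implication, if $\Delta$ is almost Cohen-Macaulay then $h_3 = \cdots = h_d = 0$ by the first paragraph, and the displayed sum collapses to $h_2 \le 0$. For the converse, the hypotheses $h_2 \le 0$ and $h_3 = \cdots = h_d = 0$ give $\deg h_{K[\Delta]}(t) \le 2$; combined with $\deg h_{K[\Delta]}(t) \ge 1$ this yields $\dim(K[\Delta]) - \dep(K[\Delta]) \in \{0,1\}$, so $\Delta$ is almost Cohen-Macaulay.

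The one genuinely new point, beyond what Proposition~\ref{P3} already provides, is the non-positivity of $h_2$: the regularity identity alone yields only the vanishing $h_3 = \cdots = h_d = 0$, and it is Fr\"oberg's bound $f_{d-1} \le n - d + 1$ --- read as $\sum_{i \ge 2} h_i \le 0$ --- that forces $h_2 \le 0$ once the higher entries are known to vanish. I expect this to be the main obstacle to state cleanly. I would also note that the boundary case $h_2 = 0$ recovers the Cohen-Macaulay situation of Proposition~\ref{P3}, so that the clause $h_2 \le 0$ is in fact automatic given $h_3 = \cdots = h_d = 0$, and the two conditions in the statement are mutually consistent.
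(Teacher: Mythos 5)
Your proposal is correct and follows essentially the same route as the paper's proof: the quasi-forest identity $\deg h_{K[\Delta]}(t)=\dim(K[\Delta])-\dep(K[\Delta])+1$ handles the vanishing of $h_3,\dots,h_d$ and the converse, while Fr\"oberg's bound $f_{d-1}\le n-d+1$ combined with $\sum_i h_i=f_{d-1}$, $h_0=1$, $h_1=n-d$ forces $h_2\le 0$. The only (harmless) difference is that the paper splits into the cases $h_1=0$ and $h_1>0$, whereas you observe correctly that the inequality $\sum_{i\ge 2}h_i\le 0$ covers both at once.
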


\begin{proof}
Since $\Delta$ is a quasi-forest, we have
	$\deg h_{K[\Delta]}(t)=\dim(K[\Delta])-\dep(K[\Delta])+1.$
	If $\Delta$ is almost Cohen-Macaulay, then $\deg h_{K[\Delta]}(t)\leq 2$. Therefore
	$h_3,\dots, h_d$ are zero. By \cite[Theorem 6.7.6]{V} $h_0=1$ and $h_1=f_0-d=n-d$ which is not negative. Now, we consider the following cases:
\begin{enumerate}
\item[(i)]: let $h_1=0$. Then $f_0=d$ and so $h_2$ is zero;
\item[(ii)]: let $h_1$ be positive. By Theorem \ref{T00},  we have $f_{d-1}\leq n-d+1$. Since $f_{d-1}=h_0+h_1+\dots+h_d$, it follows that $h_0+h_1+\dots+h_d\leq n-d+1$. Thus $h_2+\dots+h_d\leq 0$. It therefore follows that $h_2\leq 0$.
\end{enumerate}
	
	Conversely,  suppose that $h_2\leq 0$ and $h_3=\dots=h_d=0$.
	Since  $\dim(K[\Delta])-\dep(K[\Delta])+1=\deg h_{K[\Delta]}(t)\leq 2,$
	it follows that $\dim(K[\Delta])-\dep(K[\Delta])\leq 1$.
	Hence $\Delta$ is almost Cohen-Macaulay, as required.
\end{proof}
As before if $K[\Delta]$ is Cohen-Macaulay, then $a(K[\Delta])=\reg(K[\Delta])-\dep(K[\Delta])$. It is natural to ask whether if $K[\Delta]$ is almost Cohen-Macaulay, then $a(K[\Delta])=\reg(K[\Delta])-\dep(K[\Delta])-1$ is it true in general.
In the following we give a counter example for this question.

\begin{example}
Let $n=5$ and $I=(x_4x_5,x_1x_3x_5,x_1x_2x_5,x_1x_2x_3x_4)$ be an ideal of $R$. Then by using Macaulay 2 we have $\dim(R/I)=3$ and $\dep(R/I)=2$ and so $I$ is almost Cohen-Macaulay. On the other hand, $\reg(R/I)=3$ and $H_{R/I}(t)=\frac{1+2t+2t^2}{(1-t)^3}$. Thus $a(R/I)=-1$ and $\reg(R/I)-\dep(R/I)-1\neq a(R/I)$.
\end{example}

\begin{proposition}(Compare with \cite[Corollary 2.8]{CN})	
Let $G$ be a Ferrers graph on two distinct vertex sets $V_1=\{x_1,\ldots,x_n\}$ and $V_2=\{y_1,\ldots,y_n\}$ and $I=I(G)$ be the edge ideal in the ring $S=K[x_1,\ldots,x_n,y_1,\ldots,y_n]$. Then $I$ is Cohen-Macaulay if and only if the Hilbert series $H_{S/I}(t)=\frac{1+nt}{(1-t)^n}$.
\end{proposition}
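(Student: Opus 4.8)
The plan is to reduce both implications to Proposition \ref{P3}, which already characterizes Cohen--Macaulayness of a quasi-forest through its $h$-vector, and then to supply the one genuinely geometric ingredient, namely the Krull dimension. First I would record that $\Delta:=\Ind(G)$ is a quasi-forest: since $G$ is a Ferrers graph without isolated vertices, the Corso--Nagel theorem gives $\reg I(G)=2$, and hence, as noted immediately after that theorem, $\Ind(G)$ is quasi-forest. Write $d=\dim K[\Delta]$, so that $\Delta$ has dimension $d-1$ and $h(\Delta)=(h_0,\ldots,h_d)$. Because $G$ has no isolated vertex, every one of the $2n$ vertices is a face of $\Delta$, so $f_0=2n$, and by \cite[Theorem 6.7.6]{V} we have $h_0=1$ and $h_1=f_0-d=2n-d$. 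Note that $K[\Delta]=S/I_\Delta=S/I$, so ``$\Delta$ Cohen--Macaulay'' and ``$I$ Cohen--Macaulay'' coincide.

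The direction assuming the Hilbert series is then immediate. If $H_{S/I}(t)=\frac{1+nt}{(1-t)^n}$, then since $1+nt$ does not vanish at $t=1$ the fraction is already reduced, so the order of the pole at $t=1$ is $n$; thus $d=n$ and the $h$-polynomial is $1+nt$. In particular $h_2=\cdots=h_d=0$, and Proposition \ref{P3} yields that $\Delta$, equivalently $I$, is Cohen--Macaulay.

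For the converse, suppose $I$ is Cohen--Macaulay. Proposition \ref{P3} forces $h_2=\cdots=h_d=0$, so the $h$-polynomial is $1+h_1t=1+(2n-d)t$ and $H_{S/I}(t)=\frac{1+(2n-d)t}{(1-t)^d}$. It remains to prove $d=n$, equivalently $h_1=n$, and here I would argue at the level of the graph. The class $V_1=\{x_1,\ldots,x_n\}$ is a face of $\Delta$, and since $G$ has no isolated vertex every $y_j$ is adjacent to some $x_i$, so $V_1$ cannot be enlarged and is therefore a facet of $\Delta$ of cardinality $n$ (and symmetrically so is $V_2$). Now a Cohen--Macaulay ring is unmixed, and the minimal primes of $S/I(G)$ are generated by the minimal vertex covers of $G$, whose complements are precisely the facets of $\Delta$; unmixedness thus forces all facets of $\Delta$ to have the same cardinality. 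Since $V_1$ is a facet of size $n$, every facet has size $n$, so $\dim\Delta=n-1$, i.e.\ $d=n$. Consequently $h_1=2n-n=n$ and $H_{S/I}(t)=\frac{1+nt}{(1-t)^n}$, as required.

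The only step that is not pure bookkeeping with the $h$-vector is the identity $d=n$ in the converse, and I expect it to be the main obstacle to state cleanly. The mechanism I would rely on is exactly the pairing unmixedness (from Cohen--Macaulayness) together with the observation that each bipartition class is a maximal independent set of size $n$; this pins the common facet size to $n$ and hence the dimension to $n$. Throughout I would keep the standing hypothesis that $G$ has no isolated vertices explicit, since it is what both the quasi-forest conclusion and the maximality of $V_1,V_2$ depend on.
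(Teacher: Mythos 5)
Your proof is correct and follows the same overall strategy as the paper (reduce everything to the $h$-vector of the quasi-forest $\Ind(G)$ via Proposition \ref{P3} and the degree formula for the $h$-polynomial), but it is actually more complete than the paper's own argument on the one substantive point. In the forward direction the paper simply asserts ``$d=\dim S/I=n$'' as though it came out of Proposition \ref{P3}, which says nothing about dimension; and the claim is genuinely not automatic for a Ferrers graph (for instance, with $n=3$ and edges $x_1y_1,x_1y_2,x_1y_3,x_2y_1,x_3y_1$ the set $\{x_2,x_3,y_2,y_3\}$ is independent, so $\dim S/I=4>n$ --- of course that graph is not unmixed, hence not Cohen--Macaulay). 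Your argument --- $V_1$ and $V_2$ are facets of size $n$ because $G$ has no isolated vertices, Cohen--Macaulayness forces unmixedness, unmixedness forces purity of $\Ind(G)$, hence $d=n$ --- is exactly the justification the paper omits, and you are also right to keep the ``no isolated vertices'' hypothesis explicit, since both the quasi-forest property (via Corso--Nagel) and the maximality of $V_1,V_2$ depend on it. In the converse direction your route (read off $d=n$ and the $h$-vector from the reduced Hilbert series, then apply Proposition \ref{P3}) is interchangeable with the paper's (apply $\deg h_{S/I}(t)-\reg(S/I)=\dim(S/I)-\dep(S/I)$ directly), since Proposition \ref{P3} is itself proved from that equality.
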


\begin{proof}
Let $I$ be Cohen-Macaulay. Then by Proposition \ref{P3} $h_2,\dots, h_d$ are zero and $d=\dim R/I=n$. Since $h_1=f_0-d=2n-n=n$, it follows that
$H_{S/I}(t)=\frac{1+nt}{(1-t)^n}$. Conversely, suppose $H_{S/I}(t)=\frac{1+nt}{(1-t)^n}$. Since $I$ has a linear resolution, we have
$\deg h_{S/I}(t)-\reg(S/I)=\dim(S/I)-\dep(S/I)$ and $\reg(S/I)=1$. By hypothesis $\deg h_{S/I}(t)=1$ and it therefore follows $\dim(S/I)=\dep(S/I)$
Thus $I$ is Cohen-Macaulay, as required.
\end{proof}

The following result proved by Cimpoeas in \cite[Proposition 1.3]{C}:
\begin{proposition}\label{P22}
Let $G=C_n$ be a cycle graph and $I=I(G)$. Then $\depth R/I=\lceil\frac{n-1}{3}\rceil$.
\end{proposition}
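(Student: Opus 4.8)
The plan is to reduce the computation to the depth of the edge ideals of paths and then to a single homological non-vanishing. Write $P_m$ for the path on $m$ vertices, and throughout I use the Depth Lemma for a short exact sequence $0\to A\to B\to C\to 0$, together with its refinement that $\dep A\le \dep C$ forces $\dep B=\dep A$. First I would establish the auxiliary formula $\dep R/I(P_m)=\lceil m/3\rceil$ by induction on $m$: it is clear for $m\le 3$ (where the depth is $1$), and for $m\ge 4$ I split off the interior vertex $x_2$ via
\[
0\longrightarrow R/(I:x_2)\xrightarrow{\ \cdot x_2\ } R/I(P_m)\longrightarrow R/(I+(x_2))\longrightarrow 0 .
\]
Here $(I:x_2)=(x_1,x_3)+I(P_{m-3})$ with $x_2$ a free variable, so $\dep R/(I:x_2)=1+\lceil (m-3)/3\rceil$, while passing to $R/(I+(x_2))$ isolates $x_1$ and leaves the path on $x_3,\dots,x_m$, giving $\dep R/(I+(x_2))=1+\lceil (m-2)/3\rceil$. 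As $\lceil (m-3)/3\rceil\le \lceil (m-2)/3\rceil$, the refinement of the Depth Lemma yields $\dep R/I(P_m)=1+\lceil (m-3)/3\rceil=\lceil m/3\rceil$.

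For the cycle I would run the analogous sequence on one vertex $x_1$ of $C_n$:
\[
0\longrightarrow R/(I:x_1)\xrightarrow{\ \cdot x_1\ } R/I(C_n)\longrightarrow R/(I+(x_1))\longrightarrow 0 .
\]
Now $(I:x_1)=(x_2,x_n)+I(P_{n-3})$ with $x_1$ free, so $\dep R/(I:x_1)=1+\lceil (n-3)/3\rceil=\lceil n/3\rceil$, and $R/(I+(x_1))\cong K[x_2,\dots,x_n]/I(P_{n-1})$ has depth $\lceil (n-1)/3\rceil$ by the path formula. The Depth Lemma gives the lower bound $\dep R/I(C_n)\ge \lceil (n-1)/3\rceil$. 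When $n\not\equiv 1\ (\mathrm{mod}\ 3)$ the two outer depths are equal, and the same refinement used above gives the equality $\dep R/I(C_n)=\lceil (n-1)/3\rceil$ immediately.

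The remaining case $n\equiv 1\ (\mathrm{mod}\ 3)$ is the genuinely delicate one: the outer depths differ by one, the Depth Lemma returns only the lower bound, and the bound coming from the largest associated prime (equivalently, the independent domination number of $C_n$) overshoots by exactly one. Here I would obtain the matching upper bound by bounding the projective dimension from below. By Auslander--Buchsbaum $\dep R/I(C_n)=n-\pdim R/I(C_n)$, and by Hochster's formula the Betti number in top internal degree is $\beta_{i,n}\big(R/I(C_n)\big)=\dim_K\widetilde H_{\,n-i-1}\big(\Ind(C_n);K\big)$. Writing $n=3k+1$, so that $\lceil (n-1)/3\rceil=k$, the desired inequality $\dep R/I(C_n)\le k$ is equivalent to $\pdim R/I(C_n)\ge n-k$, i.e.\ to the non-vanishing $\widetilde H_{k-1}\big(\Ind(C_n);K\big)\ne 0$. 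This is precisely what the known homotopy type of the independence complex of a cycle supplies, since $\Ind(C_{3k+1})\simeq S^{k-1}$ (Kozlov); alternatively one may prove only this weaker non-vanishing directly from the standard Mayer--Vietoris recursion for $\Ind(C_n)$. I expect this homological input — pinning down $\widetilde H_{k-1}(\Ind(C_n))$, equivalently showing that the connecting map in local cohomology attached to the sequence above fails to be injective — to be the main obstacle, since it is exactly the point where all of the elementary depth and associated-prime estimates lose one unit.
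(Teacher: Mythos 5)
Your argument is correct, but note first that the paper does not actually prove this proposition: it is quoted from Cimpoeas \cite[Proposition 1.3]{C}, so there is no in-paper proof to compare against line by line. Your derivation of the path formula $\dep R/I(P_m)=\lceil m/3\rceil$ and the reduction of the cycle to paths via $0\to R/(I:x_1)\to R/I\to R/(I,x_1)\to 0$ is the standard route for the lower bound; the identifications $(I:x_1)=(x_2,x_n)+I(P_{n-3})$ (with $x_1$ free) and $(I,x_1)=I(P_{n-1})+(x_1)$, the resulting depths $\lceil n/3\rceil$ and $\lceil (n-1)/3\rceil$, and your refinement of the Depth Lemma (if $\dep A\le\dep C$ then $\dep B=\dep A$) are all correct, and they do dispose of every case except $n\equiv 1\pmod 3$. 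You are right that this residue class is the genuinely delicate one, and your way of closing it --- Hochster's formula $\beta_{i,n}(R/I)=\dim_K\widetilde{H}_{n-i-1}(\Ind(C_n);K)$ together with Kozlov's computation $\Ind(C_{3k+1})\simeq S^{k-1}$, which gives $\beta_{n-k,n}\ne 0$, hence $\pdim R/I\ge n-k$ and $\dep R/I\le k$ by Auslander--Buchsbaum --- is valid and matches the lower bound exactly. The trade-off is that you import a nontrivial topological theorem (the homotopy type of the independence complex of a cycle); if you wanted a self-contained elementary proof you would have to carry out the Mayer--Vietoris or inductive homology computation you only allude to, since, as you correctly observe, the bounds from associated primes and from $\dep R/I\le\dep R/(I:x_j)$ both stall at $k+1$. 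As written, with Kozlov's theorem cited, the proof is complete and correct.
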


\begin{Theorem}
Let $G=C_n$ be a cycle graph and $I=I(G)$. Then $I$ is almost Cohen-Macaulay if and only if $n=3,4,5,6,7,8,9,11$.
\end{Theorem}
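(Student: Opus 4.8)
The plan is to combine two closed formulas---one for the depth and one for the dimension of $R/I$---and then reduce the almost-Cohen--Macaulay condition to a single arithmetic inequality in $n$, which I solve by a residue analysis modulo $6$.

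First I would record the two invariants. The depth is supplied directly by Proposition~\ref{P22}: $\dep R/I = \lceil \frac{n-1}{3}\rceil$. For the dimension, recall that for a Stanley--Reisner ring $\dim K[\Delta] = \dim\Delta + 1$, and that the facets of $\Ind(C_n)$ are exactly the maximal independent sets of $C_n$; hence $\dim R/I$ equals the independence number of the cycle, $\dim R/I = \lfloor n/2\rfloor$ (equivalently, $n$ minus the minimal vertex-cover number $\lceil n/2\rceil$). By the definition in the introduction, $I$ is almost Cohen--Macaulay precisely when $\dep R/I \ge \dim R/I - 1$, that is, when $\dim R/I - \dep R/I \le 1$. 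Substituting the two formulas, the whole problem reduces to determining all integers $n\ge 3$ for which
\[
\left\lfloor \frac{n}{2}\right\rfloor - \left\lceil \frac{n-1}{3}\right\rceil \le 1.
\]

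The main (indeed essentially the only) step is to solve this inequality cleanly. I would write $n = 6q + r$ with $0\le r\le 5$, so that $\lfloor n/2\rfloor = 3q + \lfloor r/2\rfloor$ and $\lceil (n-1)/3\rceil = 2q + \lceil (r-1)/3\rceil$, whence the defect equals $q + \bigl(\lfloor r/2\rfloor - \lceil (r-1)/3\rceil\bigr)$. A one-line table shows the bracketed residue term is $0$ for $r\in\{0,1,2,3,5\}$ and $1$ for $r=4$. Therefore the defect is $\le 1$ exactly in the following situations: $q=0$, which (since $n\ge 3$) gives $n=3,4,5$; and $q=1$ with $r\neq 4$, which gives $n=6,7,8,9,11$ while excluding $n=10$. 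For $q\ge 2$ the defect is at least $2$, so no further $n$ qualifies. This yields precisely $n\in\{3,4,5,6,7,8,9,11\}$, as claimed.

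The only place demanding any care is the residue bookkeeping---in particular evaluating $\lceil (r-1)/3\rceil$ correctly at $r=0$ (where it is $0$) and noticing the $r=4$ anomaly, which is exactly what removes $n=10$ from the list while keeping $n=11$. Beyond this finite, elementary case check I expect no genuine obstacle, since both of the needed closed formulas are already available (the depth from Proposition~\ref{P22}, the dimension from the independence number of $C_n$).
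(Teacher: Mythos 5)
Your proposal is correct, and it reaches the conclusion by the same basic comparison as the paper --- $\dim R/I=\lfloor n/2\rfloor$ versus $\depth R/I=\lceil (n-1)/3\rceil$ from Proposition \ref{P22} --- but the two supporting steps are carried out quite differently, and in both cases your version is the more self-contained one. For the dimension, the paper runs an induction on $n$ via the short exact sequences $0\to R/(J:x_n)\to R/J\to R/(J,x_n)\to 0$ for the path ideals $I(P_n)$ and then transfers the result to the cycle, whereas you simply read off $\dim R/I$ as the independence number of $C_n$ (the maximal faces of $\Ind(C_n)$ are the maximal independent sets, and $\dim K[\Delta]=\dim\Delta+1$); this is shorter and avoids the induction entirely. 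For the final step, the paper proves only the ``only if'' direction by this comparison, disposes of the ``if'' direction by a Macaulay2 computation, and leaves the actual arithmetic of ``comparing $\depth R/I$ and $\dim R/I$'' implicit; your residue analysis modulo $6$ (defect $=q$ for $r\neq 4$ and $q+1$ for $r=4$, where $n=6q+r$) handles both directions uniformly and makes explicit exactly why $n=10$ drops out while $n=11$ survives. I checked your residue table and it is correct, including the boundary value $\lceil(r-1)/3\rceil=0$ at $r=0$. The one thing you should state rather than leave tacit is the standard fact that the independence number of $C_n$ is $\lfloor n/2\rfloor$, but that is a one-line observation. Overall your argument is a complete, computer-free replacement for the paper's proof.
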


\begin{proof}
$(\Longleftarrow)$. By using Macaulay 2 one can easily obtain the result.\\
$(\Longrightarrow)$. Let $J=I(P_n)$, where $P_n$ is a path of length $n-1$ with $n-1$ edges $x_ix_{i+1}$ such that $1\leq i\leq n-1$. Now by induction on $n$, we prove that $\dim(R/J)=\lfloor\frac{n}{2}\rfloor$. If $n=3$, then by using Macaulay 2 there is nothing to prove. Suppose $n\geq 4$ and the result has been proved for smaller values of $n$. Consider the exact sequence
\[0\longrightarrow R/(J:x_n)\overset{x_n}\longrightarrow R/J\longrightarrow R/(J,x_n)\longrightarrow 0.\ \ (\ast)
\]
Since $(J:x_n)=(I(P_{n-2}),x_{n-1})$, it follows that $\dim R/(J:x_n)=\dim R^{'}[x_n]/I(P_{n-2})\\=\dim R^{'}/I(P_{n-2})+1$, where $R^{'}=K[x_1,\ldots,x_{n-2}]$. Now, by induction hypothesis, we obtain $\dim R/(J:x_n)=\lfloor\frac{n-2}{2}\rfloor+1=\lfloor\frac{n}{2}\rfloor$.
Similarly, $(J,x_n)=(I(P_{n-1}),x_n)$ and again by using induction hypothesis $\dim R/(J,x_n)=\dim K[x_1,\ldots,x_{n-1}]/(I(P_{n-1}))=\lfloor\frac{n-1}{2}\rfloor$.
Since $\dim R/J=\max\{\dim R/(J:x_n),\dim R/(J,x_n)\}$, it follows that $\dim R/J=\lfloor\frac{n}{2}\rfloor$. Since $(I:x_n)=(I(P_{n-3}),x_{n-1},x_1)$, similarly by using $I$ in stead of $J$ in the exact sequence $(\ast)$, we get $\dim R/I=\lfloor\frac{n}{2}\rfloor$. Also, by Proposition \ref{P22} we have
$\depth R/I=\lceil\frac{n-1}{3}\rceil$. Hence by comparing $\depth R/I$ and $\dim R/I$ we conclude that $n=3,4,5,6,7,8,9,11$.
\end{proof}



\begin{thebibliography}{}
\bibitem{BH} W. Bruns and J. Herzog, {\it Cohen-Macaulay rings}, Cambridge University Press, Cambridge, UK, (1998).
\bibitem{CTT}
L. Chu, Z. Tang and H. Tang, {\it A note on almost Cohen-Macaulay modules}, J. Algebra Appl. {\bf 14}
(2015) 1550136.
\bibitem{CRST} M. Chudnovsky, N. Robertson, P. Seymour and R. Thomas, {\it The strong perfect graph theorem}, Annals of Mathematics, {\bf 164} (2006), 51-229.
\bibitem{C} M. Cimpoeas, {\it On the Stanley depth of edge ideals of line and cyclic graphs}, Romanian J. Math. Computer Science, {\bf 5}(2015), 70-75.
\bibitem{CN} A. Corso and U. Nagel, {\it Monomial and toric ideals associated to Ferrers graphs}, Tran. Amer. Math. Soc., {\bf 361}(2009), 1371-1395.
\bibitem{CRT} M. Crupi, G. Rinaldo and N. Terai, {\it Cohen-Macaulay edge ideal whose height is half of the number of vertices}, Nagoya Math. J. {\bf 201}(2011), 117-131.
\bibitem{DHS} H. Dao, C. Huneke and J. Schweig, {\it Bounds on the regularity and projective dimension of ideals associated to graphs}, J. Alg. Comb., {\bf 38}(2013), 37-55.
\bibitem{Fa} S. Faridi, {\it Simplicial trees are sequentially Cohen-Macaulay}, J. Pure and Appl. Algebra, {\bf 190}(2003), 121-136.
\bibitem{F} R. Fr\"oberg, {\it On Stanley-Reisner rings}, in topics in algebra, part 2 (Warsaw, 1988), pp. 57-70, Banach Center Publ., {\bf 26}, PWN, Warsaw, (1990).
\bibitem{GPSY} A. Goodarzi, M.R. Pournaki, S.A. Seyed Fakhari and S. Yassemi, {\it On the h-vector of a simplicial complex with Serre's condition}, J. Pure and Appl. Algebra, {\bf 216}(2012), 91-94.
\bibitem{GY} A. Goodarzi and S. Yassemi, {\it Shellable quasi-forests and their h-triangles}, Manu. Math., {\bf 137}(2012), 475-481.
\bibitem{GS} D.R. Grayson and M. E. Stillman, {\it Macaulay 2, a software system for research in algebraic geometry}, Available at
    {http://www.math.uiuc.edu/Macaulay2/}.
  \bibitem{Ha}
Y. Han, {\it D-rings}, Acta Math. Sinica. {\bf 4}(1998), 1047-1052.
\bibitem{HH} J. Herzog and T. Hibi, {\it Monomial ideals}, GTM., {\bf 260}, Springer, Berlin, (2011).
\bibitem{HHZ} J. Herzog, T. Hibi and X. Zheng, {\it Dirac's theorem on chordal graphs and Alexander duality},  Eur. J. Comb.,
{\bf 25}(2004), 949-960.
\bibitem{I}
C. Ionescu, {\it More properties of almost Cohen-Macaulay rings}, J. Comm. Algebra, {\bf 7(3)} (2015)
363-372.
\bibitem{K1}
M. Kang, {\it Almost Cohen-Macaulay modules}, Comm. Algebra, {\bf 29(2)}(2001), 781-787.
\bibitem{K2}
M. Kang, {\it Addendum to almost Cohen-Macaulay modules}, {\bf 30}(2)(2002), 1049-1052.
\bibitem{MN2} A. Mafi and D. Naderi, {\it A note on almost Cohen-Macaulay monomial ideal}, arXiv:2107.06742v1.
\bibitem{TM}
S. Tabejamaat and A. Mafi, {\it About a Serre-type condition for modules}, J. Algebra Appl. {\bf 16}
1750206.
\bibitem{TMA}
S. Tabejamaat, A. Mafi and Kh. Ahmadi Amoli,{\it  Property of Almost Cohen-Macaulay over Extension
Modules}, Algebra Colloquium, {\bf 24} (2017), 509-518.
\bibitem{Te} N. Terai, {\it Generalization of Eagon-Reiner theorem and $h$-vectors of graded rings}, Preprint (2000).
\bibitem{Va} A. Van Tuyl, {\it A beginner's guide to edge and cover ideals}, Lecture notes in Math., {\bf 2083}(2013), 63-94.
\bibitem{Vs} W.V. Vasconcelos, {\it Computational methods in commutative algebra and algebraic geometry}, Springer-Verlag, (1998).
\bibitem{V2}
R. H. Villarreal, {\it Cohen-Macaulay graphs}, Manuscripta Math. {\bf 66(3)} (1990), 227-293.
\bibitem{V} R.H. Villarreal, {\it Monomial Algebras}, Monographs and Research Notes in Mathematics, Chapman and Hall/CRC, (2015).
\bibitem{W} R. Woodroofe, {\it Vertex decomposable graphs and obstructions to shellability}, Proc. Amer. Math. Soc, {\bf 137}(2009), 3235-3246.
\bibitem{Z} X. Zheng, {\it Resolution of facet ideals}, Comm. Algebra, {\bf 32}(2004), 2301-2324.
\end{thebibliography}
\end{document}